\definecolor{linkred}{rgb}{0.48,0.1,0.05}
\definecolor{linkblue}{RGB}{16, 78, 139}
\definecolor{leafgreen}{rgb}{0.21,0.66,0.24}
	\titlespacing{\section}{0pt}{12pt}{0pt}
	\titlespacing{\subsection}{0pt}{6pt}{0pt}
\long\def\@footnotetext#1{%
\H@@footnotetext{%
\ifHy@nesting 
\hyper@@anchor{\@currentHref}{#1}%
\else 
\Hy@raisedlink{\hyper@@anchor{\@currentHref}{\relax}}#1%
\fi 
}}
\def\@footnotemark{%
\leavevmode 
\ifhmode\edef\@x@sf{\the\spacefactor}\nobreak\fi 
\H@refstepcounter{Hfootnote}%
\hyper@makecurrent{Hfootnote}%
\hyper@linkstart{link}{\@currentHref}%
\@makefnmark 
\hyper@linkend 
\ifhmode\spacefactor\@x@sf\fi 
\relax 
}%
\renewcommand*\@footnotemark{%
\leavevmode 
\ifhmode 
\edef\@x@sf{\the\spacefactor}%
\FN@mf@check 
\nobreak 
\fi 
\H@refstepcounter{Hfootnote}%
\hyper@makecurrent{Hfootnote}%
\hyper@linkstart{link}{\@currentHref}%
\@makefnmark 
\hyper@linkend 
\ifFN@pp@towrite 
\FN@pp@writetemp 
\FN@pp@towritefalse 
\fi 
\FN@mf@prepare 
\ifhmode\spacefactor\@x@sf\fi 
\relax%
}%
\theoremstyle{plain}
\newtheorem{theorem}{Theorem}[section]
\theoremstyle{definition}
\newcommand{\R}{{\mathbb R}}
\newcommand{\CC}{{\mathcal C}}
\newcommand{\diam}{{\rm diam}}
\newcommand{\lev}{{\rm lev}}
\newcommand{\FF}{\mathcal F}
\newcommand{\vol}{{\rm vol}}
\newtheorem*{thmA}{Theorem A}
\newtheorem*{thmB}{Theorem B}
\newtheorem*{thmC}{Theorem C}
\long\def\symbolfootnote[#1]#2{\begingroup%
\def\thefootnote{\fnsymbol{footnote}}\footnote[#1]{#2}\endgroup}
\def\blfootnote{\xdef\@thefnmark{}\@footnotetext}
\begin{document}

{\Large \bfseries \sc Distances in domino flip graphs}


{\bfseries Hugo Parlier\symbolfootnote[1]{\normalsize Research supported by Swiss National Science Foundation grant number PP00P2\textunderscore 153024 \\
{\em 2010 Mathematics Subject Classification:} Primary: 52C20. Secondary: 05B45, 57M15, 57M50, 82B20. \\
{\em Key words and phrases:} domino tilings, flip graphs
} and Samuel Zappa}

{\em Abstract.} 
This article is about measuring and visualizing distances between domino tilings. Given two tilings of a simply connected square tiled surface, we're interested in the minimum number of flips between two tilings. Given a certain shape, we're interested in computing the diameters of the flip graphs, meaning the maximal distance between any two of its tilings. Building on work of Thurston and others, we give geometric interpretations of distances which result in formulas for the diameters of the flip graphs of rectangles or Aztec diamonds.
\vspace{1cm}

\section{Introduction}\label{sec:introduction}

Let $S$ be a square tiled surface by which we mean a surface obtained by pasting together $1$ by $1$ Euclidean squares. We'll generally be interested in when $S$ is a simply connected shape cut out from standard square tiling of the plane. A good example to keep in mind is when $S$ is an $n$ by $m$ rectangle. A {\it domino tiling} of $S$ is a tiling of $S$ by $2$ by $1$ rectangles (dominos). Note that even if $S$ is made of an even number of squares, it might not be tileable. An example is given by the infamous mutilated chessboard, an $n$ by $n$ board with two opposite corners removed. We're interested in understanding the set of all tilings of $S$ when they exist.

If a tiling $T$ of $S$ has two dominos that share a long edge, they fill a $2\times 2$ square and one can obtain a new tiling $T'$ of $S$ by rotating the square by a quarter turn. This operation we call a {\it flip} (see \autoref{ghx:intro}). 

\begin{figure}[h]
\begin{center}
\definecolor{ffffff}{rgb}{1.,1.,1.}
\definecolor{cqcqcq}{rgb}{0.752941176471,0.752941176471,0.752941176471}
\definecolor{aqaqaq}{rgb}{0.627450980392,0.627450980392,0.627450980392}
\subfloat[A tiling of a surface]{
\definecolor{cqcqcq}{rgb}{0.752941176471,0.752941176471,0.752941176471}
\definecolor{aqaqaq}{rgb}{0.627450980392,0.627450980392,0.627450980392}
\begin{tikzpicture}[line cap=round,line join=round,>=triangle 45,x=0.55cm,y=0.55cm]
\clip(0.9,0.9) rectangle (5.1,5.1);
\draw [color=aqaqaq] (1.,1.)-- (1.,5.);
\draw [line width=0.4pt,color=cqcqcq] (1.,2.)-- (1.,1.);
\draw [line width=0.4pt,color=cqcqcq] (1.,1.)-- (2.,1.);
\draw [line width=0.4pt,color=cqcqcq] (2.,1.)-- (2.,2.);
\draw [line width=0.4pt,color=cqcqcq] (2.,2.)-- (1.,2.);
\draw [line width=0.4pt,color=cqcqcq] (2.,3.)-- (2.,2.);
\draw [line width=0.4pt,color=cqcqcq] (2.,2.)-- (3.,2.);
\draw [line width=0.4pt,color=cqcqcq] (3.,2.)-- (3.,3.);
\draw [line width=0.4pt,color=cqcqcq] (3.,3.)-- (2.,3.);
\draw [line width=0.4pt,color=cqcqcq] (3.,4.)-- (3.,3.);
\draw [line width=0.4pt,color=cqcqcq] (3.,3.)-- (4.,3.);
\draw [line width=0.4pt,color=cqcqcq] (4.,3.)-- (4.,4.);
\draw [line width=0.4pt,color=cqcqcq] (4.,4.)-- (3.,4.);
\draw [line width=0.4pt,color=cqcqcq] (2.,5.)-- (2.,4.);
\draw [line width=0.4pt,color=cqcqcq] (2.,4.)-- (3.,4.);
\draw [line width=0.4pt,color=cqcqcq] (3.,4.)-- (3.,5.);
\draw [line width=0.4pt,color=cqcqcq] (3.,5.)-- (2.,5.);
\draw [line width=0.4pt,color=cqcqcq] (1.,4.)-- (1.,3.);
\draw [line width=0.4pt,color=cqcqcq] (1.,3.)-- (2.,3.);
\draw [line width=0.4pt,color=cqcqcq] (2.,3.)-- (2.,4.);
\draw [line width=0.4pt,color=cqcqcq] (2.,4.)-- (1.,4.);
\draw [line width=0.4pt,color=cqcqcq] (4.,3.)-- (4.,2.);
\draw [line width=0.4pt,color=cqcqcq] (4.,2.)-- (5.,2.);
\draw [line width=0.4pt,color=cqcqcq] (5.,2.)-- (5.,3.);
\draw [line width=0.4pt,color=cqcqcq] (5.,3.)-- (4.,3.);
\draw [color=cqcqcq] (1.,5.)-- (3.,5.);
\draw [color=cqcqcq] (3.,5.)-- (3.,4.);
\draw [color=cqcqcq] (3.,4.)-- (5.,4.);
\draw [color=cqcqcq] (5.,4.)-- (5.,2.);
\draw [color=cqcqcq] (5.,2.)-- (3.,2.);
\draw [color=cqcqcq] (3.,2.)-- (3.,1.);
\draw [color=cqcqcq] (3.,1.)-- (1.,1.);
\draw [line width=1.pt] (1.,1.)-- (3.,1.);
\draw [line width=1.pt] (3.,1.)-- (3.,2.);
\draw [line width=1.pt] (3.,2.)-- (1.,2.);
\draw [line width=1.pt] (1.,2.)-- (1.,1.);
\draw [line width=1.pt] (2.,2.)-- (2.,4.);
\draw [line width=1.pt] (3.,2.)-- (3.,4.);
\draw [line width=1.pt] (2.,4.)-- (3.,4.);
\draw [line width=1.pt] (1.,4.)-- (2.,4.);
\draw [line width=1.pt] (1.,4.)-- (1.,2.);
\draw [line width=1.pt] (1.,5.)-- (1.,4.);
\draw [line width=1.pt] (1.,5.)-- (3.,5.);
\draw [line width=1.pt] (3.,5.)-- (3.,4.);
\draw [line width=1.pt] (3.,4.)-- (4.,4.);
\draw [line width=1.pt] (4.,4.)-- (4.,2.);
\draw [line width=1.pt] (4.,2.)-- (3.,2.);
\draw [line width=1.pt] (4.,2.)-- (5.,2.);
\draw [line width=1.pt] (5.,2.)-- (5.,4.);
\draw [line width=1.pt] (5.,4.)-- (4.,4.);
\end{tikzpicture}} \quad \quad \quad
\subfloat[A new tiling obtained by a flip]{
\definecolor{cqcqcq}{rgb}{0.752941176471,0.752941176471,0.752941176471}
\definecolor{aqaqaq}{rgb}{0.627450980392,0.627450980392,0.627450980392}
\begin{tikzpicture}[line cap=round,line join=round,>=triangle 45,x=0.55cm,y=0.55cm]
\clip(0.9,0.9) rectangle (5.1,5.1);
\draw [color=aqaqaq] (1.,1.)-- (1.,5.);
\draw [line width=0.4pt,color=cqcqcq] (1.,2.)-- (1.,1.);
\draw [line width=0.4pt,color=cqcqcq] (1.,1.)-- (2.,1.);
\draw [line width=0.4pt,color=cqcqcq] (2.,1.)-- (2.,2.);
\draw [line width=0.4pt,color=cqcqcq] (2.,2.)-- (1.,2.);
\draw [line width=0.4pt,color=cqcqcq] (2.,3.)-- (2.,2.);
\draw [line width=0.4pt,color=cqcqcq] (2.,2.)-- (3.,2.);
\draw [line width=0.4pt,color=cqcqcq] (3.,2.)-- (3.,3.);
\draw [line width=0.4pt,color=cqcqcq] (3.,3.)-- (2.,3.);
\draw [line width=0.4pt,color=cqcqcq] (3.,4.)-- (3.,3.);
\draw [line width=0.4pt,color=cqcqcq] (3.,3.)-- (4.,3.);
\draw [line width=0.4pt,color=cqcqcq] (4.,3.)-- (4.,4.);
\draw [line width=0.4pt,color=cqcqcq] (4.,4.)-- (3.,4.);
\draw [line width=0.4pt,color=cqcqcq] (2.,5.)-- (2.,4.);
\draw [line width=0.4pt,color=cqcqcq] (2.,4.)-- (3.,4.);
\draw [line width=0.4pt,color=cqcqcq] (3.,4.)-- (3.,5.);
\draw [line width=0.4pt,color=cqcqcq] (3.,5.)-- (2.,5.);
\draw [line width=0.4pt,color=cqcqcq] (1.,4.)-- (1.,3.);
\draw [line width=0.4pt,color=cqcqcq] (1.,3.)-- (2.,3.);
\draw [line width=0.4pt,color=cqcqcq] (2.,3.)-- (2.,4.);
\draw [line width=0.4pt,color=cqcqcq] (2.,4.)-- (1.,4.);
\draw [line width=0.4pt,color=cqcqcq] (4.,3.)-- (4.,2.);
\draw [line width=0.4pt,color=cqcqcq] (4.,2.)-- (5.,2.);
\draw [line width=0.4pt,color=cqcqcq] (5.,2.)-- (5.,3.);
\draw [line width=0.4pt,color=cqcqcq] (5.,3.)-- (4.,3.);
\draw [color=cqcqcq] (1.,5.)-- (3.,5.);
\draw [color=cqcqcq] (3.,5.)-- (3.,4.);
\draw [color=cqcqcq] (3.,4.)-- (5.,4.);
\draw [color=cqcqcq] (5.,4.)-- (5.,2.);
\draw [color=cqcqcq] (5.,2.)-- (3.,2.);
\draw [color=cqcqcq] (3.,2.)-- (3.,1.);
\draw [color=cqcqcq] (3.,1.)-- (1.,1.);
\draw [line width=1pt] (1.,1.)-- (3.,1.);
\draw [line width=1pt] (3.,1.)-- (3.,2.);
\draw [line width=1pt] (3.,2.)-- (1.,2.);
\draw [line width=1pt] (1.,2.)-- (1.,1.);
\draw [line width=1pt] (2.,2.)-- (2.,4.);
\draw [line width=1pt] (2.,4.)-- (3.,4.);
\draw [line width=1.pt] (1.,4.)-- (2.,4.);
\draw [line width=1.pt] (1.,4.)-- (1.,2.);
\draw [line width=1.pt] (1.,5.)-- (1.,4.);
\draw [line width=1.pt] (1.,5.)-- (3.,5.);
\draw [line width=1.pt] (3.,5.)-- (3.,4.);
\draw [line width=1.pt] (3.,4.)-- (4.,4.);
\draw [line width=1.pt] (4.,4.)-- (4.,2.);
\draw [line width=1.pt] (4.,2.)-- (3.,2.);
\draw [line width=1.pt] (4.,2.)-- (5.,2.);
\draw [line width=1.pt] (5.,2.)-- (5.,4.);
\draw [line width=1.pt] (5.,4.)-- (4.,4.);
\draw [line width=1.pt] (2.,3.)-- (4.,3.);
\end{tikzpicture}} \quad \quad \quad
\subfloat[The dual representation]{
\definecolor{ffffff}{rgb}{1.,1.,1.}
\definecolor{cqcqcq}{rgb}{0.752941176471,0.752941176471,0.752941176471}
\definecolor{aqaqaq}{rgb}{0.627450980392,0.627450980392,0.627450980392}
\begin{tikzpicture}[line cap=round,line join=round,>=triangle 45,x=0.55cm,y=0.55cm]
\clip(0.9,0.9) rectangle (5.1,5.1);
\draw [color=aqaqaq] (1.,1.)-- (1.,5.);
\draw [line width=0.4pt,color=cqcqcq] (1.,2.)-- (1.,1.);
\draw [line width=0.4pt,color=cqcqcq] (1.,1.)-- (2.,1.);
\draw [line width=0.4pt,color=cqcqcq] (2.,1.)-- (2.,2.);
\draw [line width=0.4pt,color=cqcqcq] (2.,2.)-- (1.,2.);
\draw [line width=0.4pt,color=cqcqcq] (2.,3.)-- (2.,2.);
\draw [line width=0.4pt,color=cqcqcq] (2.,2.)-- (3.,2.);
\draw [line width=0.4pt,color=cqcqcq] (3.,2.)-- (3.,3.);
\draw [line width=0.4pt,color=cqcqcq] (3.,3.)-- (2.,3.);
\draw [line width=0.4pt,color=cqcqcq] (3.,4.)-- (3.,3.);
\draw [line width=0.4pt,color=cqcqcq] (3.,3.)-- (4.,3.);
\draw [line width=0.4pt,color=cqcqcq] (4.,3.)-- (4.,4.);
\draw [line width=0.4pt,color=cqcqcq] (4.,4.)-- (3.,4.);
\draw [line width=0.4pt,color=cqcqcq] (2.,5.)-- (2.,4.);
\draw [line width=0.4pt,color=cqcqcq] (2.,4.)-- (3.,4.);
\draw [line width=0.4pt,color=cqcqcq] (3.,4.)-- (3.,5.);
\draw [line width=0.4pt,color=cqcqcq] (3.,5.)-- (2.,5.);
\draw [line width=0.4pt,color=cqcqcq] (1.,4.)-- (1.,3.);
\draw [line width=0.4pt,color=cqcqcq] (1.,3.)-- (2.,3.);
\draw [line width=0.4pt,color=cqcqcq] (2.,3.)-- (2.,4.);
\draw [line width=0.4pt,color=cqcqcq] (2.,4.)-- (1.,4.);
\draw [line width=0.4pt,color=cqcqcq] (4.,3.)-- (4.,2.);
\draw [line width=0.4pt,color=cqcqcq] (4.,2.)-- (5.,2.);
\draw [line width=0.4pt,color=cqcqcq] (5.,2.)-- (5.,3.);
\draw [line width=0.4pt,color=cqcqcq] (5.,3.)-- (4.,3.);
\draw [color=cqcqcq] (1.,5.)-- (3.,5.);
\draw [color=cqcqcq] (3.,5.)-- (3.,4.);
\draw [color=cqcqcq] (3.,4.)-- (5.,4.);
\draw [color=cqcqcq] (5.,4.)-- (5.,2.);
\draw [color=cqcqcq] (5.,2.)-- (3.,2.);
\draw [color=cqcqcq] (3.,2.)-- (3.,1.);
\draw [color=cqcqcq] (3.,1.)-- (1.,1.);
\draw [line width=1pt](2.5,4.5)-- (1.5,4.5);
\draw [line width=1pt](1.5,3.5)-- (1.5,2.5);
\draw [line width=1pt](4.5,3.5)-- (4.5,2.5);
\draw [line width=1pt](1.5,1.5)-- (2.5,1.5);
\draw [line width=1pt](2.5,3.5)-- (3.5,3.5);
\draw [line width=1pt](2.5,2.5)-- (3.5,2.5);
\begin{scriptsize}
\draw [fill=ffffff] (1.5,1.5) circle (1.5pt);
\draw [fill=ffffff] (1.5,2.5) circle (1.5pt);
\draw [fill=ffffff] (2.5,2.5) circle (1.5pt);
\draw [fill=ffffff] (2.5,1.5) circle (1.5pt);
\draw [fill=ffffff] (2.5,4.5) circle (1.5pt);
\draw [fill=ffffff] (1.5,4.5) circle (1.5pt);
\draw [fill=ffffff] (1.5,3.5) circle (1.5pt);
\draw [fill=ffffff] (2.5,3.5) circle (1.5pt);
\draw [fill=ffffff] (3.5,2.5) circle (1.5pt);
\draw [fill=ffffff] (3.5,3.5) circle (1.5pt);
\draw [fill=ffffff] (4.5,2.5) circle (1.5pt);
\draw [fill=ffffff] (4.5,3.5) circle (1.5pt);
\end{scriptsize}
\end{tikzpicture}}

\caption{}
\label{ghx:intro}
\end{center}
\end{figure}

Dual to the square tiling of $S$ is a graph $S^{*}$ and domino tilings of $S$ are easily represented in $S^{*}$ as collections of disjoint edges that cover all vertices (see  \autoref{ghx:intro} for an example). 

With this in mind, associated to tilings of a tileable $S$ is the domino flip graph $\FF_S$, defined as follows. Vertices of $\FF_S$ are tilings and we place an edge between two tilings if they are related by a single flip. We think of the graph $\FF_S$ as a metric space by assigning length $1$ to each edge and we denote the induced distance on the vertices of $\FF_S$ by $d_{\FF_S}$. This gives natural metric on the space of tilings of $S$. We are interested in the geometry of $\FF_S$.

To illustrate the type of questions we're interested in, consider the example of when $S$ is a $n \times 2$ rectangle with $n\geq 1$. The number of tilings of $S$ is the $(n+1)$th Fibonacci number. This is a well known puzzle/exercise that can be proved by an induction argument. A geometric property of these graphs that we'll pay close attention to is their {\it diameter} by which we mean the maximal distance between any two domino tilings. In this case, it's not too difficult to work out.

\begin{figure}[h]
\begin{center}
\definecolor{ffffff}{rgb}{1.,1.,1.}
\definecolor{cqcqcq}{rgb}{0.752941176471,0.752941176471,0.752941176471}
\definecolor{aqaqaq}{rgb}{0.627450980392,0.627450980392,0.627450980392}
\subfloat[A tiling of a {\it n} x 2 rectangle]{
\begin{tikzpicture}[line cap=round,line join=round,>=triangle 45,x=0.60cm,y=0.60cm]
\clip(0.9,0.9) rectangle (11.1,3.1);
\draw [line width=0.4pt,color=cqcqcq] (1.,1.)-- (5.,1.);
\draw [line width=0.4pt,color=cqcqcq] (7.,1.)-- (11.,1.);
\draw [line width=0.4pt,color=cqcqcq] (11.,1.)-- (11.,3.);
\draw [line width=0.4pt,color=cqcqcq] (11.,3.)-- (7.,3.);
\draw [line width=0.4pt,color=cqcqcq] (5.,3.)-- (1.,3.);
\draw [line width=0.4pt,color=cqcqcq] (1.,3.)-- (1.,1.);
\draw [line width=0.4pt,color=cqcqcq] (1.,2.)-- (2.,2.);
\draw [line width=0.4pt,color=cqcqcq] (2.,2.)-- (2.,1.);
\draw [line width=0.4pt,color=cqcqcq] (2.,1.)-- (1.,1.);
\draw [line width=0.4pt,color=cqcqcq] (1.,1.)-- (1.,2.);
\draw [line width=0.4pt,color=cqcqcq] (2.,3.)-- (3.,3.);
\draw [line width=0.4pt,color=cqcqcq] (3.,3.)-- (3.,2.);
\draw [line width=0.4pt,color=cqcqcq] (3.,2.)-- (2.,2.);
\draw [line width=0.4pt,color=cqcqcq] (2.,2.)-- (2.,3.);
\draw [line width=0.4pt,color=cqcqcq] (3.,2.)-- (4.,2.);
\draw [line width=0.4pt,color=cqcqcq] (4.,2.)-- (4.,1.);
\draw [line width=0.4pt,color=cqcqcq] (4.,1.)-- (3.,1.);
\draw [line width=0.4pt,color=cqcqcq] (3.,1.)-- (3.,2.);
\draw [line width=0.4pt,color=cqcqcq] (4.,3.)-- (5.,3.);
\draw [line width=0.4pt,color=cqcqcq] (5.,3.)-- (5.,2.);
\draw [line width=0.4pt,color=cqcqcq] (5.,2.)-- (4.,2.);
\draw [line width=0.4pt,color=cqcqcq] (4.,2.)-- (4.,3.);
\draw [line width=0.4pt,color=cqcqcq] (7.,2.)-- (8.,2.);
\draw [line width=0.4pt,color=cqcqcq] (8.,2.)-- (8.,1.);
\draw [line width=0.4pt,color=cqcqcq] (8.,1.)-- (7.,1.);
\draw [line width=0.4pt,color=cqcqcq] (7.,1.)-- (7.,2.);
\draw [line width=0.4pt,color=cqcqcq] (8.,3.)-- (9.,3.);
\draw [line width=0.4pt,color=cqcqcq] (9.,3.)-- (9.,2.);
\draw [line width=0.4pt,color=cqcqcq] (9.,2.)-- (8.,2.);
\draw [line width=0.4pt,color=cqcqcq] (8.,2.)-- (8.,3.);
\draw [line width=0.4pt,color=cqcqcq] (9.,2.)-- (10.,2.);
\draw [line width=0.4pt,color=cqcqcq] (10.,2.)-- (10.,1.);
\draw [line width=0.4pt,color=cqcqcq] (10.,1.)-- (9.,1.);
\draw [line width=0.4pt,color=cqcqcq] (9.,1.)-- (9.,2.);
\draw [line width=0.4pt,color=cqcqcq] (10.,3.)-- (11.,3.);
\draw [line width=0.4pt,color=cqcqcq] (11.,3.)-- (11.,2.);
\draw [line width=0.4pt,color=cqcqcq] (11.,2.)-- (10.,2.);
\draw [line width=0.4pt,color=cqcqcq] (10.,2.)-- (10.,3.);
\draw [line width=0.4pt,dash pattern=on 3pt off 3pt,color=cqcqcq] (5.,3.)-- (7.,3.);
\draw [line width=0.4pt,dash pattern=on 3pt off 3pt,color=cqcqcq] (5.,1.)-- (7.,1.);
\draw [line width=0.4pt,color=cqcqcq] (5.,2.)-- (5.,1.);
\draw [line width=0.4pt,color=cqcqcq] (7.,2.)-- (7.,3.);
\draw [line width=1pt](1.5,2.5)-- (1.5,1.5);
\draw [line width=1pt](2.5,2.5)-- (3.5,2.5);
\draw [line width=1pt](2.5,1.5)-- (3.5,1.5);
\draw [line width=1pt](4.5,2.5)-- (5.,2.5);
\draw [line width=1pt](4.5,1.5)-- (5.,1.5);
\draw [line width=1pt,dotted] (5.,1.5)-- (5.3,1.5);
\draw [line width=1pt,dotted] (5.,2.5)-- (5.3,2.5);
\draw [line width=1pt](7.5,1.5)-- (7.5,2.5);
\draw [line width=1pt](8.5,2.5)-- (8.5,1.5);
\draw [line width=1pt](9.5,2.5)-- (10.5,2.5);
\draw [line width=1pt](10.5,1.5)-- (9.5,1.5);
\begin{scriptsize}
\draw [fill=ffffff] (1.5,2.5) circle (1.5pt);
\draw [fill=ffffff] (1.5,1.5) circle (1.5pt);
\draw [fill=ffffff] (2.5,1.5) circle (1.5pt);
\draw [fill=ffffff] (2.5,2.5) circle (1.5pt);
\draw [fill=ffffff] (3.5,2.5) circle (1.5pt);
\draw [fill=ffffff] (3.5,1.5) circle (1.5pt);
\draw [fill=ffffff] (4.5,1.5) circle (1.5pt);
\draw [fill=ffffff] (4.5,2.5) circle (1.5pt);
\draw [fill=ffffff] (7.5,2.5) circle (1.5pt);
\draw [fill=ffffff] (7.5,1.5) circle (1.5pt);
\draw [fill=ffffff] (8.5,1.5) circle (1.5pt);
\draw [fill=ffffff] (8.5,2.5) circle (1.5pt);
\draw [fill=ffffff] (9.5,2.5) circle (1.5pt);
\draw [fill=ffffff] (9.5,1.5) circle (1.5pt);
\draw [fill=ffffff] (10.5,2.5) circle (1.5pt);
\draw [fill=ffffff] (10.5,1.5) circle (1.5pt);
\end{scriptsize}
\end{tikzpicture}} \quad \quad 
\subfloat[Our standard tiling]{
\begin{tikzpicture}[line cap=round,line join=round,>=triangle 45,x=0.60cm,y=0.60cm]
\clip(0.9,0.9) rectangle (11.1,3.1);
\draw [line width=0.4pt,color=cqcqcq] (1.,1.)-- (5.,1.);
\draw [line width=0.4pt,color=cqcqcq] (7.,1.)-- (11.,1.);
\draw [line width=0.4pt,color=cqcqcq] (11.,1.)-- (11.,3.);
\draw [line width=0.4pt,color=cqcqcq] (11.,3.)-- (7.,3.);
\draw [line width=0.4pt,color=cqcqcq] (5.,3.)-- (1.,3.);
\draw [line width=0.4pt,color=cqcqcq] (1.,3.)-- (1.,1.);
\draw [line width=0.4pt,color=cqcqcq] (1.,2.)-- (2.,2.);
\draw [line width=0.4pt,color=cqcqcq] (2.,2.)-- (2.,1.);
\draw [line width=0.4pt,color=cqcqcq] (2.,1.)-- (1.,1.);
\draw [line width=0.4pt,color=cqcqcq] (1.,1.)-- (1.,2.);
\draw [line width=0.4pt,color=cqcqcq] (2.,3.)-- (3.,3.);
\draw [line width=0.4pt,color=cqcqcq] (3.,3.)-- (3.,2.);
\draw [line width=0.4pt,color=cqcqcq] (3.,2.)-- (2.,2.);
\draw [line width=0.4pt,color=cqcqcq] (2.,2.)-- (2.,3.);
\draw [line width=0.4pt,color=cqcqcq] (3.,2.)-- (4.,2.);
\draw [line width=0.4pt,color=cqcqcq] (4.,2.)-- (4.,1.);
\draw [line width=0.4pt,color=cqcqcq] (4.,1.)-- (3.,1.);
\draw [line width=0.4pt,color=cqcqcq] (3.,1.)-- (3.,2.);
\draw [line width=0.4pt,color=cqcqcq] (4.,3.)-- (5.,3.);
\draw [line width=0.4pt,color=cqcqcq] (5.,3.)-- (5.,2.);
\draw [line width=0.4pt,color=cqcqcq] (5.,2.)-- (4.,2.);
\draw [line width=0.4pt,color=cqcqcq] (4.,2.)-- (4.,3.);
\draw [line width=0.4pt,color=cqcqcq] (7.,2.)-- (8.,2.);
\draw [line width=0.4pt,color=cqcqcq] (8.,2.)-- (8.,1.);
\draw [line width=0.4pt,color=cqcqcq] (8.,1.)-- (7.,1.);
\draw [line width=0.4pt,color=cqcqcq] (7.,1.)-- (7.,2.);
\draw [line width=0.4pt,color=cqcqcq] (8.,3.)-- (9.,3.);
\draw [line width=0.4pt,color=cqcqcq] (9.,3.)-- (9.,2.);
\draw [line width=0.4pt,color=cqcqcq] (9.,2.)-- (8.,2.);
\draw [line width=0.4pt,color=cqcqcq] (8.,2.)-- (8.,3.);
\draw [line width=0.4pt,color=cqcqcq] (9.,2.)-- (10.,2.);
\draw [line width=0.4pt,color=cqcqcq] (10.,2.)-- (10.,1.);
\draw [line width=0.4pt,color=cqcqcq] (10.,1.)-- (9.,1.);
\draw [line width=0.4pt,color=cqcqcq] (9.,1.)-- (9.,2.);
\draw [line width=0.4pt,color=cqcqcq] (10.,3.)-- (11.,3.);
\draw [line width=0.4pt,color=cqcqcq] (11.,3.)-- (11.,2.);
\draw [line width=0.4pt,color=cqcqcq] (11.,2.)-- (10.,2.);
\draw [line width=0.4pt,color=cqcqcq] (10.,2.)-- (10.,3.);
\draw [line width=0.4pt,dash pattern=on 3pt off 3pt,color=cqcqcq] (5.,3.)-- (7.,3.);
\draw [line width=0.4pt,dash pattern=on 3pt off 3pt,color=cqcqcq] (5.,1.)-- (7.,1.);
\draw [line width=0.4pt,color=cqcqcq] (5.,2.)-- (5.,1.);
\draw [line width=0.4pt,color=cqcqcq] (7.,2.)-- (7.,3.);
\draw [line width=1pt](1.5,2.5)-- (1.5,1.5);
\draw [line width=1pt](2.5,2.5)-- (2.5,1.5);
\draw [line width=1pt](3.5,2.5)-- (3.5,1.5);
\draw [line width=1pt](4.5,2.5)-- (4.5,1.5);
\draw [line width=1pt](7.5,2.5)-- (7.5,1.5);
\draw [line width=1pt](8.5,2.5)-- (8.5,1.5);
\draw [line width=1pt](9.5,2.5)-- (9.5,1.5);
\draw [line width=1pt](10.5,2.5)-- (10.5,1.5);
\begin{scriptsize}
\draw [fill=ffffff] (1.5,2.5) circle (1.5pt);
\draw [fill=ffffff] (1.5,1.5) circle (1.5pt);
\draw [fill=ffffff] (2.5,1.5) circle (1.5pt);
\draw [fill=ffffff] (2.5,2.5) circle (1.5pt);
\draw [fill=ffffff] (3.5,2.5) circle (1.5pt);
\draw [fill=ffffff] (3.5,1.5) circle (1.5pt);
\draw [fill=ffffff] (4.5,1.5) circle (1.5pt);
\draw [fill=ffffff] (4.5,2.5) circle (1.5pt);
\draw [fill=ffffff] (7.5,2.5) circle (1.5pt);
\draw [fill=ffffff] (7.5,1.5) circle (1.5pt);
\draw [fill=ffffff] (8.5,1.5) circle (1.5pt);
\draw [fill=ffffff] (8.5,2.5) circle (1.5pt);
\draw [fill=ffffff] (9.5,2.5) circle (1.5pt);
\draw [fill=ffffff] (9.5,1.5) circle (1.5pt);
\draw [fill=ffffff] (10.5,2.5) circle (1.5pt);
\draw [fill=ffffff] (10.5,1.5) circle (1.5pt);
\end{scriptsize}
\end{tikzpicture}}

\caption{}
\label{ghx:rectangle1}
\end{center}
\end{figure}

We begin with an upper bound on the diameter. For simplicity, suppose that $n$ is even but the general argument is identical. Among all tilings, there is one that stands out: the tiling where all dominos are upright (\autoref{ghx:rectangle1} (b)). Now observe that any two tilings can be joined by a path that passes through this tiling. To construct such a path is easy: if a tiling has any dominos that {\it aren't} upright, they must come in pairs of flippable horizontal dominos. For a given tiling, at most $\frac{n}{2}$ flips are required to put all of the dominos in upright position. In particular that means there is a path of length at most $\frac{n}{2}+\frac{n}{2}=n$ between any two tilings. Now to actually require $n$ flips would mean that all $n$ dominos on both tilings were in horizontal position, but there is only one such tiling, the tiling illustrated on the left in \autoref{ghx:rectangle}. So the two tilings were identical to begin with and were at distance $0$. That allows us to improve the upper bound: any two tilings are at distance at most $n-1$. Perhaps surprisingly, this new upper bound is sharp. 

\begin{figure}[h]
\begin{center}
\definecolor{ffffff}{rgb}{1.,1.,1.}
\definecolor{cqcqcq}{rgb}{0.752941176471,0.752941176471,0.752941176471}
\definecolor{aqaqaq}{rgb}{0.627450980392,0.627450980392,0.627450980392}
\subfloat{
\begin{tikzpicture}[line cap=round,line join=round,>=triangle 45,x=0.60cm,y=0.60cm]
\clip(0.9,0.9) rectangle (11.1,3.1);
\draw [line width=0.4pt,color=cqcqcq] (1.,1.)-- (5.,1.);
\draw [line width=0.4pt,color=cqcqcq] (7.,1.)-- (11.,1.);
\draw [line width=0.4pt,color=cqcqcq] (11.,1.)-- (11.,3.);
\draw [line width=0.4pt,color=cqcqcq] (11.,3.)-- (7.,3.);
\draw [line width=0.4pt,color=cqcqcq] (5.,3.)-- (1.,3.);
\draw [line width=0.4pt,color=cqcqcq] (1.,3.)-- (1.,1.);
\draw [line width=0.4pt,color=cqcqcq] (1.,2.)-- (2.,2.);
\draw [line width=0.4pt,color=cqcqcq] (2.,2.)-- (2.,1.);
\draw [line width=0.4pt,color=cqcqcq] (2.,1.)-- (1.,1.);
\draw [line width=0.4pt,color=cqcqcq] (1.,1.)-- (1.,2.);
\draw [line width=0.4pt,color=cqcqcq] (2.,3.)-- (3.,3.);
\draw [line width=0.4pt,color=cqcqcq] (3.,3.)-- (3.,2.);
\draw [line width=0.4pt,color=cqcqcq] (3.,2.)-- (2.,2.);
\draw [line width=0.4pt,color=cqcqcq] (2.,2.)-- (2.,3.);
\draw [line width=0.4pt,color=cqcqcq] (3.,2.)-- (4.,2.);
\draw [line width=0.4pt,color=cqcqcq] (4.,2.)-- (4.,1.);
\draw [line width=0.4pt,color=cqcqcq] (4.,1.)-- (3.,1.);
\draw [line width=0.4pt,color=cqcqcq] (3.,1.)-- (3.,2.);
\draw [line width=0.4pt,color=cqcqcq] (4.,3.)-- (5.,3.);
\draw [line width=0.4pt,color=cqcqcq] (5.,3.)-- (5.,2.);
\draw [line width=0.4pt,color=cqcqcq] (5.,2.)-- (4.,2.);
\draw [line width=0.4pt,color=cqcqcq] (4.,2.)-- (4.,3.);
\draw [line width=0.4pt,color=cqcqcq] (7.,2.)-- (8.,2.);
\draw [line width=0.4pt,color=cqcqcq] (8.,2.)-- (8.,1.);
\draw [line width=0.4pt,color=cqcqcq] (8.,1.)-- (7.,1.);
\draw [line width=0.4pt,color=cqcqcq] (7.,1.)-- (7.,2.);
\draw [line width=0.4pt,color=cqcqcq] (8.,3.)-- (9.,3.);
\draw [line width=0.4pt,color=cqcqcq] (9.,3.)-- (9.,2.);
\draw [line width=0.4pt,color=cqcqcq] (9.,2.)-- (8.,2.);
\draw [line width=0.4pt,color=cqcqcq] (8.,2.)-- (8.,3.);
\draw [line width=0.4pt,color=cqcqcq] (9.,2.)-- (10.,2.);
\draw [line width=0.4pt,color=cqcqcq] (10.,2.)-- (10.,1.);
\draw [line width=0.4pt,color=cqcqcq] (10.,1.)-- (9.,1.);
\draw [line width=0.4pt,color=cqcqcq] (9.,1.)-- (9.,2.);
\draw [line width=0.4pt,color=cqcqcq] (10.,3.)-- (11.,3.);
\draw [line width=0.4pt,color=cqcqcq] (11.,3.)-- (11.,2.);
\draw [line width=0.4pt,color=cqcqcq] (11.,2.)-- (10.,2.);
\draw [line width=0.4pt,color=cqcqcq] (10.,2.)-- (10.,3.);
\draw [line width=0.4pt,dash pattern=on 3pt off 3pt,color=cqcqcq] (5.,3.)-- (7.,3.);
\draw [line width=0.4pt,dash pattern=on 3pt off 3pt,color=cqcqcq] (5.,1.)-- (7.,1.);
\draw [line width=0.4pt,color=cqcqcq] (5.,2.)-- (5.,1.);
\draw [line width=0.4pt,color=cqcqcq] (7.,2.)-- (7.,3.);
\draw [line width=1pt](1.5,2.5)-- (2.5,2.5);
\draw [line width=1pt](1.5,1.5)-- (2.5,1.5);
\draw [line width=1pt](3.5,2.5)-- (4.5,2.5);
\draw [line width=1pt](3.5,1.5)-- (4.5,1.5);
\draw [line width=1pt](7.5,2.5)-- (8.5,2.5);
\draw [line width=1pt](7.5,1.5)-- (8.5,1.5);
\draw [line width=1pt](9.5,2.5)-- (10.5,2.5);
\draw [line width=1pt](9.5,1.5)-- (10.5,1.5);
\begin{scriptsize}
\draw [fill=ffffff] (1.5,2.5) circle (1.5pt);
\draw [fill=ffffff] (1.5,1.5) circle (1.5pt);
\draw [fill=ffffff] (2.5,1.5) circle (1.5pt);
\draw [fill=ffffff] (2.5,2.5) circle (1.5pt);
\draw [fill=ffffff] (3.5,2.5) circle (1.5pt);
\draw [fill=ffffff] (3.5,1.5) circle (1.5pt);
\draw [fill=ffffff] (4.5,1.5) circle (1.5pt);
\draw [fill=ffffff] (4.5,2.5) circle (1.5pt);
\draw [fill=ffffff] (7.5,2.5) circle (1.5pt);
\draw [fill=ffffff] (7.5,1.5) circle (1.5pt);
\draw [fill=ffffff] (8.5,1.5) circle (1.5pt);
\draw [fill=ffffff] (8.5,2.5) circle (1.5pt);
\draw [fill=ffffff] (9.5,2.5) circle (1.5pt);
\draw [fill=ffffff] (9.5,1.5) circle (1.5pt);
\draw [fill=ffffff] (10.5,2.5) circle (1.5pt);
\draw [fill=ffffff] (10.5,1.5) circle (1.5pt);
\end{scriptsize}
\end{tikzpicture}} \quad \quad \quad
\subfloat{
\begin{tikzpicture}[line cap=round,line join=round,>=triangle 45,x=0.60cm,y=0.60cm]
\clip(0.9,0.9) rectangle (11.1,3.1);
\draw [line width=0.4pt,color=cqcqcq] (1.,1.)-- (5.,1.);
\draw [line width=0.4pt,color=cqcqcq] (7.,1.)-- (11.,1.);
\draw [line width=0.4pt,color=cqcqcq] (11.,1.)-- (11.,3.);
\draw [line width=0.4pt,color=cqcqcq] (11.,3.)-- (7.,3.);
\draw [line width=0.4pt,color=cqcqcq] (5.,3.)-- (1.,3.);
\draw [line width=0.4pt,color=cqcqcq] (1.,3.)-- (1.,1.);
\draw [line width=0.4pt,color=cqcqcq] (1.,2.)-- (2.,2.);
\draw [line width=0.4pt,color=cqcqcq] (2.,2.)-- (2.,1.);
\draw [line width=0.4pt,color=cqcqcq] (2.,1.)-- (1.,1.);
\draw [line width=0.4pt,color=cqcqcq] (1.,1.)-- (1.,2.);
\draw [line width=0.4pt,color=cqcqcq] (2.,3.)-- (3.,3.);
\draw [line width=0.4pt,color=cqcqcq] (3.,3.)-- (3.,2.);
\draw [line width=0.4pt,color=cqcqcq] (3.,2.)-- (2.,2.);
\draw [line width=0.4pt,color=cqcqcq] (2.,2.)-- (2.,3.);
\draw [line width=0.4pt,color=cqcqcq] (3.,2.)-- (4.,2.);
\draw [line width=0.4pt,color=cqcqcq] (4.,2.)-- (4.,1.);
\draw [line width=0.4pt,color=cqcqcq] (4.,1.)-- (3.,1.);
\draw [line width=0.4pt,color=cqcqcq] (3.,1.)-- (3.,2.);
\draw [line width=0.4pt,color=cqcqcq] (4.,3.)-- (5.,3.);
\draw [line width=0.4pt,color=cqcqcq] (5.,3.)-- (5.,2.);
\draw [line width=0.4pt,color=cqcqcq] (5.,2.)-- (4.,2.);
\draw [line width=0.4pt,color=cqcqcq] (4.,2.)-- (4.,3.);
\draw [line width=0.4pt,color=cqcqcq] (7.,2.)-- (8.,2.);
\draw [line width=0.4pt,color=cqcqcq] (8.,2.)-- (8.,1.);
\draw [line width=0.4pt,color=cqcqcq] (8.,1.)-- (7.,1.);
\draw [line width=0.4pt,color=cqcqcq] (7.,1.)-- (7.,2.);
\draw [line width=0.4pt,color=cqcqcq] (8.,3.)-- (9.,3.);
\draw [line width=0.4pt,color=cqcqcq] (9.,3.)-- (9.,2.);
\draw [line width=0.4pt,color=cqcqcq] (9.,2.)-- (8.,2.);
\draw [line width=0.4pt,color=cqcqcq] (8.,2.)-- (8.,3.);
\draw [line width=0.4pt,color=cqcqcq] (9.,2.)-- (10.,2.);
\draw [line width=0.4pt,color=cqcqcq] (10.,2.)-- (10.,1.);
\draw [line width=0.4pt,color=cqcqcq] (10.,1.)-- (9.,1.);
\draw [line width=0.4pt,color=cqcqcq] (9.,1.)-- (9.,2.);
\draw [line width=0.4pt,color=cqcqcq] (10.,3.)-- (11.,3.);
\draw [line width=0.4pt,color=cqcqcq] (11.,3.)-- (11.,2.);
\draw [line width=0.4pt,color=cqcqcq] (11.,2.)-- (10.,2.);
\draw [line width=0.4pt,color=cqcqcq] (10.,2.)-- (10.,3.);
\draw [line width=0.4pt,dash pattern=on 3pt off 3pt,color=cqcqcq] (5.,3.)-- (7.,3.);
\draw [line width=0.4pt,dash pattern=on 3pt off 3pt,color=cqcqcq] (5.,1.)-- (7.,1.);
\draw [line width=0.4pt,color=cqcqcq] (5.,2.)-- (5.,1.);
\draw [line width=0.4pt,color=cqcqcq] (7.,2.)-- (7.,3.);
\draw [line width=1pt](1.5,2.5)-- (1.5,1.5);
\draw [line width=1pt](2.5,2.5)-- (3.5,2.5);
\draw [line width=1pt](2.5,1.5)-- (3.5,1.5);
\draw [line width=1pt](4.5,2.5)-- (5.,2.5);
\draw [line width=1pt](4.5,1.5)-- (5.,1.5);
\draw [line width=1pt,dotted] (5.,1.5)-- (5.3,1.5);
\draw [line width=1pt,dotted] (5.,2.5)-- (5.3,2.5);
\draw [line width=1pt](10.5,2.5)-- (10.5,1.5);
\draw [line width=1pt](9.5,2.5)-- (8.5,2.5);
\draw [line width=1pt](8.5,1.5)-- (9.5,1.5);
\draw [line width=1pt](7.5,2.5)-- (7.,2.5);
\draw [line width=1pt](7.5,1.5)-- (7.,1.5);
\draw [line width=1pt,dotted] (7.,2.5)-- (6.7,2.5);
\draw [line width=1pt,dotted] (7.,1.5)-- (6.7,1.5);
\begin{scriptsize}
\draw [fill=ffffff] (1.5,2.5) circle (1.5pt);
\draw [fill=ffffff] (1.5,1.5) circle (1.5pt);
\draw [fill=ffffff] (2.5,1.5) circle (1.5pt);
\draw [fill=ffffff] (2.5,2.5) circle (1.5pt);
\draw [fill=ffffff] (3.5,2.5) circle (1.5pt);
\draw [fill=ffffff] (3.5,1.5) circle (1.5pt);
\draw [fill=ffffff] (4.5,1.5) circle (1.5pt);
\draw [fill=ffffff] (4.5,2.5) circle (1.5pt);
\draw [fill=ffffff] (7.5,2.5) circle (1.5pt);
\draw [fill=ffffff] (7.5,1.5) circle (1.5pt);
\draw [fill=ffffff] (8.5,1.5) circle (1.5pt);
\draw [fill=ffffff] (8.5,2.5) circle (1.5pt);
\draw [fill=ffffff] (9.5,2.5) circle (1.5pt);
\draw [fill=ffffff] (9.5,1.5) circle (1.5pt);
\draw [fill=ffffff] (10.5,2.5) circle (1.5pt);
\draw [fill=ffffff] (10.5,1.5) circle (1.5pt);
\end{scriptsize}
\end{tikzpicture}}

\caption{}
\label{ghx:rectangle}
\end{center}
\end{figure}

The two tilings illustrated above are realize the bound. Indeed, to get from the left tiling to the right one, it will be necessary to flip all of the dominos. That will require $\frac{n}{2}$ flips in total and in particular they will all be in upright position at one point. Now to reach the right hand tiling, $n-2$ of them will have to put back in a horizontal position, which will require an additional $\frac{n-2}{2}$ flips. All in all, any path between them contains last least $n-1$ flips.

Of course arbitrary shapes won't have nice formulas for their diameters like that, but what about other shapes? How does one compute the diameter of the flip graph of the $n \times m$ rectangles? 

Before getting into our results, we observe that some of the questions we ask are similar in spirit to questions that have been investigated for triangulations of surfaces. Given a polygon, the set of its triangulations has a similar structure: one moves between triangulations by flipping edges in the triangulation. The number of triangulations of a polygon is the $n-2$th Catalan number. The associated flip graph has been extensively studied, namely by Sleator, Tarjan and Thurston, who found sharp bounds on the diameter \cite{STT}. Recently, Pournin \cite{Pournin} sharpened their result and  produced explicit examples of triangulations at maximal distance. In fact, the example we give above for the $n$ by $2$ rectangle illustrates, in a much simpler form of course, Pournin's examples. Again an example of a configuration space where the size (number of vertices) and the diameter are elegant quantities. As such, it portrays our point of view quite well and in particular why we are viewing our graphs as a type of moduli space.

Before getting into the geometry of these flip graphs, what about it's topology? In particular, are tilings always related by a sequence of flips? A remarkable theorem, which can be deduced from ideas of Thurston \cite{Thurston, EKLP1, SaldanhaEtAl}, says that if $S$ is simply connected, then $\FF_S$ {\it is} connected. This elegant relationship between the topologies of $S$ and $\FF_{S}$ is not a priori obvious and can be showed using Thurston's height function which we'll describe later. Note there are simple examples of non-simply connected surfaces whose flip graph is disconnected, see Section \ref{sec:distances}. 

One of the main tools we'll be using is the observation (see for instance \cite{SaldanhaEtAl}) which is that associated to an ordered pair of tilings $T,T'$, one obtains a collection of disjoint oriented cycles $\CC:=\CC(T,T')$ in $S^{*}$. We'll give details on why its true in Section \ref{sec:distances}. Using these cycles we can define a value function $\nu$ defined on the vertices $V(S)$ of $S$ 
$$
\nu(v) := | \nu^{+}(v) - \nu^{-}(v) |
$$
where
$$
\nu^{+}(v) := {\text{number of positive cycles of }\CC(T,T'){\text{ surrounding }}v}
$$
and
$$
\nu^{-}(v) := {\text{number of negative cycles of }\CC(T,T'){\text{ surrounding }}v}
$$

Our first interpretation of distance is the following, which relies heavily on a distance formula by Saldanha, Tomei, Casarin and Romualdo \cite{SaldanhaEtAl}, which in turn uses Thurston's height function. 
\begin{thmA}
The distance between $T$ and $T'$ is given by the formula 
$$d_{\FF_S}(T,T') = \sum_{v \in V(S)} \nu(v)$$
\end{thmA}
The advantage of this formula is that it allows another geometric interpretation of distances. In fact, we associate to $\CC(T,T')$ a $3$-dimensional shape constructed as follows. We think of $S$ as a subset of $\R^2$ and thus living in $\R^3$. (Strictly speaking this may not be true if $S$ in not geometrically embeddable in $\R^2$ - but its a useful picture to keep in mind.)  Now in any order, construct the following. To each {\it positive} cycle construct the $1$-thick volume above it. To each {\it negative} cycle, dig a $1$-thick hole below it. We think of the "holes" below $S$ as negative volume. The resulting shape we call the {\it filling shape} $F$ associated to $T$ and $T'$. Note that $F = F^{+} \cup F^{-}$ where $F^{+}$ is a volume lying above $S$ and $F^{-}$ is hole below. An immediate consequence of Theorem A is the following.

\begin{thmB}
Let $F = F^{+} \cup F^{-}$ be the filling shape associated to $T$ and $T'$. Then
$$
d_{\FF_S}(T,T') = \vol(F^{+}) - \vol( F^{-})
$$
\end{thmB}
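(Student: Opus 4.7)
The plan is to reduce Theorem B to Theorem A via a single area/vertex-count identity: for any simple cycle $C$ in $S^*$, the number of vertices of $V(S)$ lying strictly inside $C$ equals the Euclidean area enclosed by $C$. Since the vertices of $S^*$ lie on the half-integer lattice and the $V(S)$ vertices on the integer lattice, every $V(S)$-vertex in the interior of $C$ is a strict interior lattice point of a polygon on the shifted lattice, so the identity follows from Pick's theorem; one could also prove it directly by induction on the number of $2\times 1$ cells enclosed.

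With this identity in hand, I would read off the volumes $\vol(F^{\pm})$ from the very construction of the filling shape. At each point $p \in S$, the thickness of $F^+$ above $p$ equals the number of positive cycles of $\CC(T,T')$ whose interior contains $p$, and the depth of $F^-$ below $p$ equals the number of negative cycles containing $p$. Integrating over $S$ and reindexing via the area identity yields
\[
\vol(F^+) \;=\; \sum_{C \text{ positive}} \area(C) \;=\; \sum_{v \in V(S)} \nu^+(v), \qquad \vol(F^-) \;=\; \sum_{v \in V(S)} \nu^-(v).
\]

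Subtracting and invoking Theorem A then gives
\[
\vol(F^+) - \vol(F^-) \;=\; \sum_{v \in V(S)} \bigl(\nu^+(v) - \nu^-(v)\bigr) \;=\; \sum_{v \in V(S)} |\nu^+(v) - \nu^-(v)| \;=\; d_{\FF_S}(T,T'),
\]
where the middle equality rests on a sign-consistency property of the cycles of $\CC(T,T')$: viewed through the lens of the Thurston height difference $h_T - h_{T'}$, the oriented cycles surrounding any given vertex accumulate with a single overall sign, so that the signed and unsigned sums agree term by term.

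The main obstacle is precisely this last sign-consistency step. The area identity and the volume translation are essentially routine bookkeeping, but verifying that the nested cycles of $\CC(T,T')$ never produce a residual cancellation between $\nu^+(v)$ and $\nu^-(v)$ requires a careful unpacking of the orientation convention on symmetric-difference cycles, using the Saldanha--Tomei--Casarin--Romualdo height-function framework already invoked in the proof of Theorem A. Once this is in place, the identity $d_{\FF_S}(T,T') = \vol(F^+) - \vol(F^-)$ drops out immediately.
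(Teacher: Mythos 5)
Your overall route---translate the volume of the filling shape into the vertex counts $\nu^{\pm}$ and then quote Theorem A---is the same as the paper's, and the area step (a cycle of $S^{*}$ encloses exactly as many vertices of $S$ as its Euclidean area, one unit cell of the shifted lattice per vertex) is fine, although Pick's theorem as you invoke it counts points of the half-integer lattice rather than of $V(S)$; your induction, or the unit-cell decomposition itself, is the clean justification. The genuine gap is the final step. The identity $\sum_{v}\bigl(\nu^{+}(v)-\nu^{-}(v)\bigr)=\sum_{v}\bigl|\nu^{+}(v)-\nu^{-}(v)\bigr|$ forces $\nu^{+}(v)\ge \nu^{-}(v)$ at every vertex, i.e. $h_{T}\ge h_{T'}$ everywhere, which holds only when $T$ dominates $T'$ in the lattice order. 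Already when $\CC(T,T')$ is a single negative cycle enclosing $k$ vertices your chain returns $-k$ for a distance equal to $k$, and in the paper's example of \autoref{ghx:distance2} the filling shape has a nonempty hole, so the identity fails there as well. The ``sign-consistency'' property you lean on---that all cycles of $\CC(T,T')$ surrounding a given vertex carry a single orientation---is also false: on the $4\times 4$ square, take $T$ to be one of the two ring matchings of the twelve boundary squares together with the two vertical central dominoes, and $T'$ the other ring matching together with the two horizontal central dominoes; then $\CC(T,T')$ is an outer $12$-cycle and an inner $4$-cycle, nested and (for the standard coloring) oppositely oriented, so the central vertex has $\nu^{+}=\nu^{-}=1$. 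So the step you yourself single out as the main obstacle cannot be repaired in the form you propose.

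What closes the argument---and is how the paper reads its own construction---is that the filling shape is a net terrain: each positive cycle adds one unit cube over every vertex of $S$ it encloses and each negative cycle removes or digs one, so the column over $v$ has signed height $\nu^{+}(v)-\nu^{-}(v)$, with cancellation built in; $F^{+}$ and $F^{-}$ are the parts of this terrain above and below $S$, and $\vol(F^{-})$ is counted negatively. Then $\vol(F^{+})-\vol(F^{-})=\sum_{v}|\nu^{+}(v)-\nu^{-}(v)|=\sum_{v}\nu(v)$, and Theorem A finishes the proof with no sign property of the cycle collection needed (none holds). Your bookkeeping, which stacks $F^{+}$ to thickness $\nu^{+}$ above and digs $F^{-}$ to depth $\nu^{-}$ below independently, misdescribes the shape precisely in the nested opposite-sign situation above (it gives volumes $9$ and $1$ where the true shape has volumes $8$ and $0$), and it also clashes with the paper's convention that holes carry negative volume: with your unsigned $\vol(F^{-})$ the correct formula would be a sum, not a difference.
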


As in our $n\times 2$ rectangle example above, we'd like to compute diameters $\diam(\FF_S)$ for certain natural shapes $S$ such as when $S$ is a rectangle or an Aztec diamond. These are examples of particular kinds of $S$ that we call {\it Saturnian} because they can be constructed as collection of rings of $1\times 1$ squares. By using the techniques that go into Theorem B, we're able to obtain an expression for the diameters of Saturnian shapes which gives in particular the following theorem.

\begin{thmC}
When $S$ is a $n\times n$ square (with $n$ even):
  	\begin{equation*}
	\diam(\FF_{S}) = \frac{n^{3}-n}{6}
	\end{equation*}
When $S$ is an $m\times n$ rectangle (with $m \geqslant n$ and at least one is even): 
  	\begin{equation*}
	\diam(\FF_{S}) = \left\{\begin{array}{ll}
		\frac{mn^{2}}{4} - \frac{n^{3}}{12} - \frac{n}{6} & \text{if $n$ is even}\\
		\frac{mn^{2}}{4} - \frac{n^{3}}{12} + \frac{n}{12} - \frac{m}{4} & \text{otherwise}
		\end{array}
		\right.
	\end{equation*}
When $S$ is an Aztec diamond of order $n$:
	\begin{equation*}
	\diam(\FF_{S}) = \frac{n^{3}}{3} + \frac{n^{2}}{2} +  \frac{n}{6}
	\end{equation*}
\end{thmC}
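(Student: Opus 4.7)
The plan is to derive each diameter as the maximum, over pairs of tilings, of $\sum_{v \in V(S)} \nu(v)$ via Theorem A. The first step is a universal upper bound: for any $v \in V(S)$ the cycles of $\CC(T,T')$ are pairwise disjoint, so the total number of them surrounding $v$ is at most the maximal number of pairwise disjoint cycles of $S^{*}$ that can be nested around $v$ inside $S$. That number equals the combinatorial distance in cells from $v$ to $\partial S$, which I denote $\lev(v)$. Hence
$$\nu(v) = |\nu^{+}(v) - \nu^{-}(v)| \leq \nu^{+}(v) + \nu^{-}(v) \leq \lev(v),$$
and Theorem A gives $\diam(\FF_{S}) \leq \sum_{v \in V(S)} \lev(v)$.

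The bulk of the argument is saturating this bound. For each Saturnian shape I would exhibit a pair $(T,T')$ such that $\CC(T,T')$ is a family of concentric cycles all of the same sign, with exactly one cycle at each level surrounding every interior vertex; then $\nu^{-}(v)=0$ and $\nu^{+}(v)=\lev(v)$ for all $v$. Because Saturnian shapes decompose into nested rings of unit cells, I would construct $T$ and $T'$ ring by ring: in each ring $T$ uses a horizontal ``brick'' pattern and $T'$ its ninety-degree rotation, with modest adaptations on the outermost ring for the Aztec diamond (whose boundary is a staircase) and for rectangles with exactly one odd side. After checking that the symmetric difference of the two matchings produces exactly one consistently-oriented cycle per ring of $S^{*}$, equality $\diam(\FF_{S}) = \sum_{v}\lev(v)$ holds.

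The three formulas then come from direct lattice-point sums. For an $n \times n$ square with $n$ even, grouping interior vertices by level yields
$$\sum_{v} \lev(v) = \sum_{k=1}^{n/2} (n-2k+1)^{2} = \frac{n^{3}-n}{6}.$$
For an $m \times n$ rectangle with $m \geq n$, the interior grid contains a central plateau of vertices at maximal level whose width depends on $m-n$ and on the parity of $n$; summing the shell contributions plus this plateau gives the second formula and its parity split. For the Aztec diamond the level shells are themselves diamond-shaped and the analogous sum evaluates to $\tfrac{n^{3}}{3}+\tfrac{n^{2}}{2}+\tfrac{n}{6}$.

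The main obstacle is the construction of the extremal pair: one has to glue brick patterns across successive rings while ensuring that every cycle of the symmetric difference inherits the same orientation, so that positive and negative contributions to $\nu(v)$ never cancel. The parity corrections on the outer ring of a non-square rectangle and the staircase adaptation for the Aztec diamond are what ultimately account for the different closed-form expressions in the three cases of Theorem C.
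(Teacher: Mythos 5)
Your proposal follows essentially the same route as the paper: the paper bounds $\diam(\FF_S)$ above by $\sum_{v\in V(S)}\lev(v)$ (phrased there via the filling-shape volume, which equals $\sum_v \nu(v)$), attains it for Saturnian shapes by the concentric-ring cycle collection realized by two tilings differing by the alternate matchings on each ring, and then evaluates the same level sums $\sum_i |\{v:\lev(v)\geq i\}|$ to get the three closed forms. The only differences are cosmetic (you argue through Theorem A rather than Theorem B, and your ``orientation obstacle'' dissolves since the rings are disjoint and the alternate matching placed in $T$ can be chosen independently on each ring), so the argument is correct and matches the paper's.
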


We note that the cardinalities of the number of vertices of $\FF_S$ for these shapes have already been studied extensively. The number of tilings of an $m\times n$ rectangle is given by a spectacular exact formula:
$$
\sum_{j=1}^{\lceil \frac{m}{2} \rceil} \sum_{k=1}^{\lceil \frac{n}{2} \rceil} 4\left(\cos^2\frac{\pi j}{m+1} + \cos^2\frac{\pi k}{n+1}\right)
$$
due independently to Kasteleyn and Temperley-Fischer \cite{Kasteleyn, Temperley-Fisher}. The number of tilings of the Aztec diamond is 
$$2^{n(n+1)/2}$$
This result, due to Elkies, Kuperberg, Larsen, Michael and Propp \cite{EKLP1} is referred to as the Aztec diamond theorem. Using the moduli space analogy, these results constitute the computations of the size whereas our results are about the shape.

{\bf Acknowlegements.}
The authors are grateful to B\'eatrice de Tili\`ere for many interesting domino conversations.

\section{Computing distances}\label{sec:distances}

As described above, $S$ is a surface obtained by gluing $1$ by $1$ squares. As such, its vertices $V(S)$ are the points that are the images of the vertices of the squares under the pasting. These are not to be confused with the vertices of $S^{*}$, the dual graph to the pasting. Vertices of $S^{*}$ are the squares that form $S$ and two squares share an edge if the corresponding squares are adjacent. We think of $S^{*}$ as geometrically embedded with the vertices being represented as the centers of the squares (see \autoref{ghx:intro}). Tilings are now in $1$ to $1$ correspondence with {\it perfect matchings} of $S^{*}$: these are collections of edges of $S^{*}$ so that every vertex of $S^{*}$ belongs to exactly one edge.

We're mainly interested in when $S$ is simply connected embedded subset of the usual square tiling of the Euclidean plane. However, the results follow for a more general setup. We ask that $S$ is simply connected and that any interior vertex of $S$ (coming from the pasting of the squares) be surrounded by exactly $4$ squares. Said otherwise, we ask that every interior point of $S$ be locally Euclidean (of curvature $0$). This will be necessary in order to inherit a coloring from a standard black/white coloring of the square tiled Euclidean plane. Observe that $S$ may not necessarily be geometrically embeddable in $\R^2$; an example is given in  \autoref{ghx:surface}. Nonetheless, it might sometimes be convenient to think of $S$ as lying inside the plane $z=0$ inside $\R^3$.

 \begin{figure}
\begin{center}
\begin{tikzpicture}[line cap=round,line join=round,>=triangle 45,x=0.5cm,y=0.5cm]
\clip(0.9,0.9) rectangle (5.1,6.1);
\draw (1.,5.)-- (1.,3.)-- (4.,3.)-- (4.,2.)-- (3.,2.)-- (3.,3.)-- (2.,3.)-- (2.,1.)-- (5.,1.)-- (5.,5.);
\draw (2.,5.)-- (2.,6.)-- (3.,6.)-- (3.,5.)-- (5.,5.);
\draw (1.,5.)-- (3.,5.);
\draw [dash pattern=on 2pt off 2pt] (3.,3.)-- (3.,5.);
\draw [dash pattern=on 2pt off 2pt] (2.,3.)-- (2.,5.);
\end{tikzpicture}
\caption{A surface $S$ which cannot be embedded into $\R^{2}$}
\label{ghx:surface}
\end{center}
\end{figure}
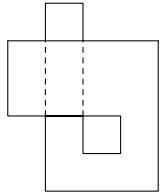

When $S$ is domino tileable (it can be tiled by $2$ by $1$ rectangles) we denote $\FF_{S}$ the flip graph of $S$. As mentioned above, $S$ being simply connected implies that $\FF_S$ is connected. A simple example of a non-simply connected $S$ with disconnected $\FF_S$ is given by a $3$ by $3$ square with the middle unit square removed. There are only two possible domino tilings of it, clearly not related by a flip, and so the flip graph in this case consists of two isolated vertices. A much less obvious result, which can be deduced from \cite{SaldanhaEtAl}, is that a $2n+1$ by $2n+1$ square with the middle unit square removed has $n+1$ connected components.

\subsection{Thurston's height function and distance formulas}
In \cite{Thurston}, Thurston described a function which turned out to be quite useful in understanding these flip graphs and similarly structured relatives. We briefly describe it in our context to keep our article as self-contained as possible. Given a tiling $T$, it attributes to each vertex $v$ of $S$ a height $h_{T}(v)$. 

We begin by coloring the squares of $S$ like those of a chessboard. The existence of such a coloring is immediate if $S$ is embedded in the plane, but otherwise it can either be sequentially colored from a given base square or can be colored via an immersion in a chess colored plane. We orient edges of $S$ so that they run clockwise around black squares and counterclockwise around whites. (Equivalently, edges are oriented so that the black square is to their right.) We then choose a boundary vertex $v_{0}$ and set $h_{T}(v_{0}) = 0$. 

\input{height}

The height of a vertex $v$ is then defined as follows. We begin by finding a path between $v_0$ and $v$ that does not cross $T$ (a sequence of edges $(e_{1},\hdots,e_{n})$ not covered by the dominos of $T$). We then define
$$
h_{T}(v):=\sum_{i=1}^{n}o(e_{i}),
$$
where 
$$o(e_{i})=\left\{\begin{array}{ll}
		+1 & \text{if the orientation of $e_{i}$ corresponds to that of the path}\\
		-1 & \text{otherwise}
		\end{array}
		\right.
$$
The function is well defined as, perhaps surprisingly at first, it doesn't depend on the choice of path. Moreover, associated to a height function is a unique tiling and so height functions and tilings are in $1$ to $1$ correspondence. This allows to define a partial order on tilings: $T\leq T'$ if $h_{T}(v) \leq h_{T'}(v)$ for every vertex $v$ of $S$, giving $\FF_S$ the structure of a {\it distributive lattice}. 

Observe that a flip will only modifies the height of its central vertex (the value will change by $4$). A key result \cite{Remila,Thurston} states that $T \leq T'$ if and only if there exists a sequence of flips transforming $T$ into $T'$ and while always {\it increasing} the height of the vertices. Now using the lattice structure, given tilings $T$ and $T'$, there exists a supremum tiling $T \land T'$. This gives us a natural path in $\FF_{S}$ between $T$ and $T'$ by combining the height increasing path between $T$ and $T \land T'$ and the height decreasing path between $T \land T'$ and $T'$. 

The resulting path is a geodesic and thus as a consequence, we get the following distance formula. 
\begin{theorem}[Theorem 3.2 of \cite{SaldanhaEtAl}]\label{thm:heightdistance}
$$
d_{\FF_S}(T,T')=\frac{1}{4}\sum_{v \in V(S)}|h_{T}(v) - h_{T'}(v)|
$$
\end{theorem}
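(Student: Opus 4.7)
The plan is to establish matching upper and lower bounds on $d_{\FF_S}(T,T')$, both of which follow from the two structural properties of the height function recalled above: (i) a single flip modifies $h_T$ at exactly one vertex, and by exactly $\pm 4$; and (ii) $T \leq T'$ if and only if $T'$ is reachable from $T$ by a sequence of height-increasing flips.

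For the lower bound, I would use (i) directly. If $T = T_0, T_1, \ldots, T_k = T'$ is any edge path in $\FF_S$, property (i) gives $\sum_{v}|h_{T_i}(v)-h_{T_{i+1}}(v)| = 4$ for every consecutive pair. A vertex-by-vertex triangle inequality then yields
\[
\sum_{v \in V(S)} |h_T(v) - h_{T'}(v)| \;\leq\; \sum_{i=0}^{k-1}\sum_{v \in V(S)}|h_{T_i}(v) - h_{T_{i+1}}(v)| \;=\; 4k,
\]
and taking $k = d_{\FF_S}(T,T')$ gives the inequality $\geq$ in the theorem.

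For the upper bound, I would invoke the lattice structure. Writing $T \land T'$ for the supremum of $T$ and $T'$, property (ii) provides a height-increasing sequence of flips from $T$ to $T \land T'$; since each such flip raises $\sum_v h(v)$ by exactly $4$, this sequence has length $\tfrac{1}{4}\sum_v (h_{T \land T'}(v)-h_T(v))$. The symmetric statement for $T'$ gives, by concatenation, a path in $\FF_S$ from $T$ to $T'$ of total length
\[
\tfrac{1}{4}\sum_{v \in V(S)}\bigl[\, (h_{T \land T'}(v) - h_T(v)) + (h_{T \land T'}(v) - h_{T'}(v))\,\bigr].
\]
The key algebraic input is then the pointwise identity $h_{T \land T'}(v)=\max(h_T(v),h_{T'}(v))$, after which the elementary relation $2\max(a,b)-a-b=|a-b|$ finishes the argument.

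The one step requiring genuine care is this last identity: that the pointwise maximum of two height functions is itself the height function of a tiling (necessarily the supremum in the lattice). I would verify it by checking that the pointwise maximum still satisfies Thurston's local increment constraints dictated by the arrow orientation along each edge, so that it is realized by some tiling; that tiling is then automatically the supremum by the very definition of the partial order. Once this identity is in place, everything else in the proof reduces to straightforward bookkeeping on the $\pm 4$ effect of a single flip.
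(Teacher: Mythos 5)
Your proposal is correct and takes essentially the same route as the paper, which sketches precisely this argument (attributed to Saldanha--Tomei--Casarin--Romualdo): the concatenation of the height-increasing path from $T$ to the supremum tiling with the height-decreasing path down to $T'$ gives the upper bound, and the fact that a single flip changes the height of exactly one vertex by $\pm 4$ yields the matching lower bound, i.e.\ the geodesic property. You also correctly isolate the one point needing care, namely that $h_{T\land T'}(v)=\max\bigl(h_{T}(v),h_{T'}(v)\bigr)$ is again a height function, which is exactly the lattice fact the paper invokes.
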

We'll use this formula to give a geometric interpretation of distances below.

\subsection{Cycles associated to tilings}

We fix $S$ and a black/white coloring of its squares, or equivalently, of the vertices of $S^{*}$. We look at the set of cycles $c$ of $S^{*}$ such that the complementary region $(S^{*}\backslash c)^{*}$ is tileable. (The set $(S^{*}\backslash c)^{*}$ is simply $S$ with the squares that $c$ passes through removed.)

We'll build cycles by considering tilings represented by disjoint edges in $S^{*}$ and completing the edges to form a cycle: we'll call these domino cycles. When a cycle is built upon a tiling, one out of every two edges is a domino edge. If a cycle is given an orientation, it's easy to see that all domino edges will begin on the same color. This observation can be used to give domino cycles a natural orientation. We orient dominos from black to white and this gives the cycle an orientation. Using the natural orientation of the plane (counter clockwise is positive), this allows us to distinguish between {\it positive} and {\it negative} domino cycles. A \textit{cycle collection} $\CC$ is a disjoint set of (oriented) cycles.

We'll now use cycles to compute the distance between tilings. We consider an {\it ordered} pair $T,T'$ of tilings. We draw both tilings simultaneously on $S^{*}$, erasing all perfectly superimposed tiles.

{\it Claim:} The union of all non superimposed tiles consists in collection of cycles $\CC(T,T')$, each cycle consisting of edges that alternatively belong to $T$ and $T'$. 

\begin{proof}[Proof of Claim:]
Unless $T = T'$, there are vertices of $S^{*}$ not covered by superimposed dominos. Consider such a vertex. Now there must be exactly one domino of both $T$ and $T'$ in the vertex. As such the subgraph of $S^{*}$ formed by all non superimposed edges of $T$ and $T'$ is a finite subgraph of degree $2$ in every edge. The claim follows.
\end{proof}

We now orient the cycles of  $\CC(T,T')$ using the orientation given by the dominos of $T$ (hence the importance of the order).

As described in the introduction, we define the value function $\nu$ on vertices $v$ of $S$:
$$
\nu(v) := | \nu^{+}(v) - \nu^{-}(v) |
$$
where $\nu^{+}(v)$, resp. $\nu^{-}(v)$, are the number of positive, resp. negative, cycles surrounding $v$. 

We can now interpret distances in terms of cycles.
\begin{thmA}
The distance between $T$ and $T'$ is given by the formula 
$$d_{\FF_S}(T,T') = \sum_{v \in V(S)} \nu(v)$$
\end{thmA}

\begin{proof}
With the help of the distance formula from Theorem \ref{thm:heightdistance}, it suffices to show that for every vertex $v$, $|h_{T}(v) - h_{T'}(v)| = 4 \nu(v)$.

Height functions always coincide on the boundary of $S$ so we need to check the above formula for a vertex $v$ inside $S$. To do so, consider an oriented path from a boundary vertex $v_b$ to $v$ which follows only positively oriented edges of $S$. To construct such a path, consider any edge path between $v_b$ and $v$, and if any of the edges are oriented in the negative direction, they can be replaced by a $3$ edge detour of positively oriented edges.

As we evolve along this path, we're going to play close attention to how $h_T$ and $h_T'$ evolve when we cross cycles. Before doing so we observe the following.

Consider a cycle $c$ of $\CC(T,T')$. There are natural {\it inside} and {\it outside} regions of $S \setminus c$. Perpendicular to the edges of $c$ are the oriented edges of $S$, oriented as in the definition of the height function (black is on their right). When you follow the edges of $c$, the edges of $S$ encountered alternate between pointing inside the cycle and out. Since the edges of a cycle alternate between corresponding to dominos of $T$ and $T'$, we also have the following. If the cycle is positive, dominos of $T$ cover all the exiting edges of $S$ and dominos of $T'$ cover the entering ones. The opposite situation occurs for negative cycles.

Suppose the edge $\vec{e}=(v',v'')$ {\it enters} a positive cycle (there is one more positive cycle surrounding $v''$ than $v'$, i.e., $\nu^{+}(v'')= \nu^{+}(v')+1$). 

Then, the edge $\vec{e}$ is not covered by a domino of $T$. Thus:
$$
h_T(v'') = h_T(v') + 1
$$
However, $\vec{e}$ is covered by a domino of $T'$. To contour this domino, there is a $3$ edge path of negatively oriented edges and thus
$$
h_{T'}(v'') = h_{T'}(v') - 3
$$
So entering a positively cycle changes the difference $h_T - h_{T'}$ by $+4$.

The same argument shows that both entering a negatively oriented cycle or exiting a positively oriented cycle affect $h_T - h_{T'}$ by $-4$. And as one might expect, exiting a negatively oriented cycle changes the difference $h_T - h_{T'}$ by $+4$. 

All in all, for a vertex $v$, we've shown that
$$
h_T(v) - h_{T'}(v) = 4 \nu^{+}(v) - 4 \nu^{-}(v)
$$
and hence 
$$
|h_{T}(v) - h_{T'}(v)| = 4 \nu(v)
$$
as claimed.
\end{proof}

For an example application of the theorem, see \autoref{ghx:distance2}(c). It becomes  straightforward to compute the distance between the two tilings (in this example $54$) by counting cycles surrounding vertices.

We're now going to interpret distance in terms of a filling shape as described in the introduction. 

\subsection{Filling shapes}

We associate to $\CC(T,T')$ a $3$-dimensional shape, subset of $S \times \R$. When $S$ is a subset of $\R^2$, the shape belongs to $\R^3$. The notion of being {\it above} and {\it below} is all relative to $S$. For instance in $\R^3$, a point "above" $S$ is a point with the same $x,y$ coordinates as a point of $S$ but a positive $z$ coordinate. 

We choose any order on the cycles of $\CC(T,T')$ and for each one we perform the following construction. If the cycle is positively oriented, we construct the $1$-thick volume above it. If it is negatively oriented, we dig a $1$-thick hole below it. The resulting shape is a collection of "buildings" and "holes" and we think of the holes below $S$ as being of negative volume. This is the {\it filling shape} $F= F^{+} \cup F^{-}$ associated to $T$ and $T'$ where $F^{+}$ is a volume lying above $S$ and $F^{-}$ is hole below. Notice that exchanging $T$ and $T'$ reflects $F$ through the plane containing $S$.
\input{distance}
An example of two tilings such that their associated filling shape is entirely above $S$ is given in \autoref{ghx:distance}. A more complicated example with non empty $F^{+}$ and $F^{-}$ is given in \autoref{ghx:distance2}.
\input{distance2}

Observe that a filling shape can be built using $1\times 1 \times 1$ cubes set on or below vertices of $S$ (and {\it not} of $S^{*}$).

The filling shape is of interest to us because its volume embodies the distance between the tilings. The following is now a direct consequence of Theorem A.

\begin{thmB}
Any $T,T'\in \FF_{S}$ with filling shape $F= F^{+} \cup F^{-}$ satisfy
$$
d_{\FF_{S}} (T,T') = \vol(F^{+}) - \vol(F^{-})
$$
\end{thmB}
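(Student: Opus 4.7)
The plan is to reduce Theorem B to Theorem A by a direct vertex-by-vertex bookkeeping on $F$. The filling shape $F$ is built from $1 \times 1 \times 1$ cubes stacked directly above or below vertices $v \in V(S)$, so it is determined by an integer-valued signed height function $h: V(S) \to \Z$, where $h(v)$ counts cubes of $F^+$ above $v$ as positive and cubes of $F^-$ below $v$ as negative.

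The central step is to identify $h(v) = \nu^+(v) - \nu^-(v)$ for every $v \in V(S)$. Since the cycles of $\CC(T,T')$ are pairwise disjoint, each cycle surrounding $v$ contributes independently to $h(v)$: a positive cycle adds a $1$-thick layer above $v$ and increments $h(v)$ by $+1$, while a negative cycle digs a $1$-thick hole below $v$ and decrements $h(v)$ by $1$. Because these are additive integer increments of the height at $v$, the ordered construction is order-independent, and when both positive and negative cycles enclose the same $v$ the corresponding buildings and holes cancel pairwise in $h$. Summing the contributions of all cycles surrounding $v$ yields the claimed identity.

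With this identification in hand, the volumes decompose as
\[
\vol(F^+) = \sum_{v \in V(S)} \max\bigl(h(v),0\bigr), \qquad \vol(F^-) = \sum_{v \in V(S)} \min\bigl(h(v),0\bigr),
\]
the latter being $\leq 0$ by the negative-volume convention. Subtracting,
\[
\vol(F^+) - \vol(F^-) = \sum_{v \in V(S)} |h(v)| = \sum_{v \in V(S)} |\nu^+(v) - \nu^-(v)| = \sum_{v \in V(S)} \nu(v),
\]
which equals $d_{\FF_S}(T,T')$ by Theorem A.

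The main obstacle is justifying the identity $h(v) = \nu^+(v) - \nu^-(v)$: one must argue that at vertices $v$ enclosed by cycles of both orientations, the ordered construction treats overlapping positive buildings and negative holes as additive signed contributions that cancel, rather than as a $+1$ tower and a $-1$ hole stacking independently. Once this interpretation of the construction is pinned down, the decomposition of volumes and Theorem A together give the result immediately.
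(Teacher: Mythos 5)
Your proposal is correct and follows essentially the same route as the paper, which derives Theorem B directly from Theorem A via the observation that the filling shape consists of columns of $1\times 1\times 1$ cubes over vertices of $S$, so that $\vol(F^{+})-\vol(F^{-})=\sum_{v\in V(S)}\nu(v)$. The cancellation reading you worry about is indeed the intended one (the net column over $v$ has signed height $\nu^{+}(v)-\nu^{-}(v)$), and it is the interpretation the paper itself relies on later when arguing that a maximal-volume cycle collection must have all cycles of the same orientation.
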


\section{Diameters of flip graphs}\label{sec:diameters}

Let us now focus on the diameters of $\FF_S$ for different surfaces $S$.

The lattice structure implies the existence of a unique maximal element $T^{+}$ and a unique minimal element $T^{-}$. Given any tiling $T$ and any vertex $v$, we have
$$h_{T^{-}}(v) \leqslant h_{T}(v) \leqslant h_{T^{+}}(v)$$
with at least strict inequality for some vertex $v$ (different tilings have different height functions). Now using the distance formula of Theorem \ref{thm:heightdistance}, $T^{+}$ and $T^{-}$ are the unique tilings that realize the diameter of $\FF_S$.

In addition, it follows from Theorem B that $\diam(\FF_S)$ is the maximal volume of filling shapes. Note that if $\CC(T,T')$ is a set of cycles realizing the maximal volume, all of the cycles must have the same orientation, otherwise reorienting cycles in the same way will give a larger volume. By reversing the order of the two diameter realizing tilings if necessary, we can thus suppose that the diameter is realized by a collection of positive cycles.  

We denote by $\vol_{+1}(c)$ for the 1-thick volume built upon a cycle $c$ and we have the following.
\begin{theorem}\label{thm:cyclediameter}
$\diam(\FF_{S}) = \max_{\CC \text{ of $S$}} \sum_{c \in \CC} \vol_{+1}(c)$
\end{theorem}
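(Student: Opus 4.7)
The plan is to sandwich the diameter between the two sides by using Theorem B to pass fluidly between pairs of tilings and the cycle collections they determine, and then arguing that reorienting cycles can only make distances grow.

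First, I would establish the upper bound $\diam(\FF_S) \leq \max_{\CC} \sum_{c\in\CC} \vol_{+1}(c)$. Start from an arbitrary ordered pair $T,T'$, let $\CC(T,T')$ be the associated cycle collection, and recall from Theorem B that $d_{\FF_S}(T,T') = \vol(F^+) - \vol(F^-)$. The key observation is that on each individual cycle $c$ of $\CC(T,T')$, the edges alternate between dominos of $T$ and dominos of $T'$, so I can \emph{swap} the role of $T$ and $T'$ along $c$ independently of the other cycles. Swapping reverses the orientation of $c$ as a domino cycle, because it exchanges which color the distinguished alternating edges run from. Doing this swap on every negatively oriented cycle of $\CC(T,T')$ produces a new ordered pair $(\tilde T, \tilde T')$ whose cycle collection coincides with $\CC(T,T')$ as a set of unoriented cycles, but with every cycle now positively oriented. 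For this new pair, $F^- = \emptyset$ and $F^+ = \bigcup_c \vol_{+1}(c)$, so
\begin{equation*}
d_{\FF_S}(\tilde T, \tilde T') = \sum_{c} \vol_{+1}(c) \;\geq\; \vol(F^+) - \vol(F^-) = d_{\FF_S}(T,T'),
\end{equation*}
with the inequality strict exactly when a negative cycle was present. Since $\CC(\tilde T, \tilde T')$ is itself a cycle collection of $S$, this shows $d_{\FF_S}(T,T') \leq \max_\CC \sum_c \vol_{+1}(c)$.

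For the reverse inequality, I would take any cycle collection $\CC$ of $S$ and reverse the construction: since each cycle $c$ in $\CC$ is built so that $(S^*\setminus c)^*$ is tileable and all cycles are disjoint, the complement of the union of the cycles in $\CC$ can be tiled (using simple connectedness of $S$ and the fact that we are taking disjoint cycles each with tileable complement). Fix one tiling $T_0$ of this complement. On each cycle $c$, the edges of $c$ split into two alternating classes. Assign one class to $T$ and the other to $T'$, choosing the assignment so that $c$ becomes positively oriented as a domino cycle in $\CC(T,T')$. Combining these choices with $T_0$ on the complement yields tilings $T,T'$ of $S$ with $\CC(T,T') = \CC$ and every cycle positive, so Theorem B gives $d_{\FF_S}(T,T') = \sum_c \vol_{+1}(c)$. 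Hence $\diam(\FF_S) \geq \sum_c \vol_{+1}(c)$ for every cycle collection $\CC$, and the theorem follows.

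The main subtlety, and what I expect to be the principal point to check carefully, is the reorientation step: one must verify that swapping the $T/T'$ labels along a single cycle $c$ really does produce a globally consistent pair of tilings (it does, because the swap only modifies edges whose union is precisely $c$, leaving every vertex of $S^*$ covered exactly once in both new tilings) and that this swap genuinely flips the sign of $c$'s orientation (which follows from the convention that domino cycles are oriented from black to white along their domino edges). Once these two points are clear, the squeezing of $\diam(\FF_S)$ between the two quantities is immediate, and in fact one recovers the earlier remark that both $T$ and $T'$ realizing the diameter must correspond to an all-positive cycle collection.
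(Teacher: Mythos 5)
Your argument is correct and is essentially the paper's: the theorem is obtained there directly from Theorem B together with the observation that reorienting all cycles of $\CC(T,T')$ positively (by exchanging the roles of $T$ and $T'$ along each negative cycle) can only increase the volume, and your lower bound realizes a given collection by splitting each cycle into its two alternating edge classes and tiling the rest once and for all, which is exactly the construction the paper leaves implicit. The one caveat is your parenthetical claim that disjoint cycles with individually tileable complements automatically have a tileable joint complement: this does not follow from simple connectedness alone, and it is not needed if, as the paper intends, a cycle collection is one arising from superimposing two tilings (equivalently, one whose whole complement is tileable), so joint tileability should be treated as part of the definition of $\CC$ rather than deduced from the cycle-by-cycle condition.
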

\begin{figure}[h]
\begin{center}
\includegraphics[scale=0.24]{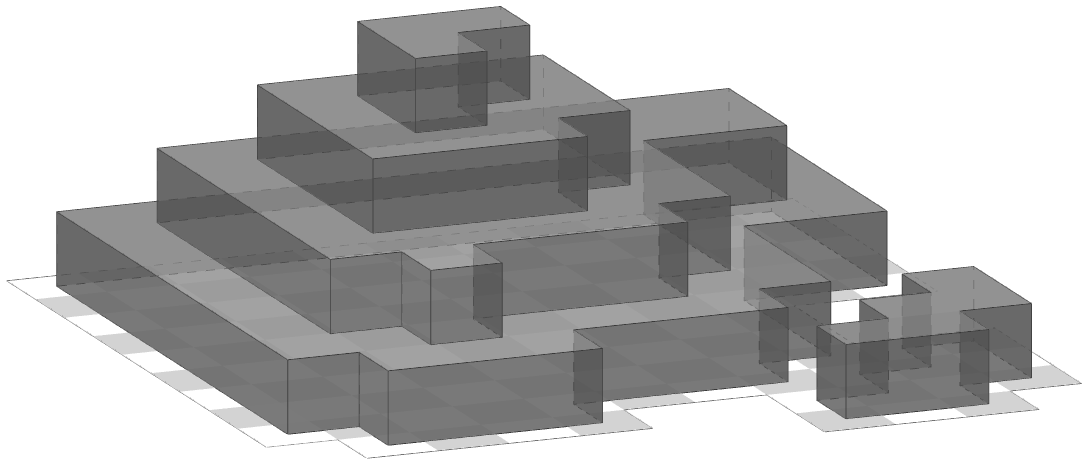}
\caption*{\footnotesize A filling shape of volume of $137$ realizes the diameter of the flip graph of \autoref{ghx:distance2}}
\caption{}
\label{ghx:diameter}
\end{center}
\end{figure}

The maximal filling shape seems to be related to a type of isoperimetric profile of $S$. In general, it may be difficult to find it. For example, to find the maximal filling shape illustrated in \autoref{ghx:diameter}, one could either look at Thurston's height function or find local arguments based on colorings. However, for certain types of surfaces, we can exhibit explicit formulas.

To do so we define the {\it first ring} $R_{1}$ to be the set of all $1\times 1$ squares of $S$ that the boundary of $S$ belongs to. We define $S^{1}$ to be $S\setminus R_1$. We then define the rings $R_i$ iteratively: $R_{2}$ is the set of squares of $S^{1}$ that the boundary of $S^{1}$ belongs to. Generally, $R_{i}$ is the set of squares of $S^{i-1}$ that contain $\partial S_{i-1}$ (where $S^{0} = S$). We'll denote by $V_{i}$ the set of vertices that belong to $R^{i}$ but not to $S^{i-1}$.

It will be convenient to define a function on vertices of $S$ as follows: the level $\lev(s)$ of $s \in V(S)$ is the number of 1 by 1 squares of $S$ needed to connect $s$ to the boundary of $S$. So vertices on the boundary of $S$ are of level $0$ and those of level $i$ are exactly the vertices $V_i$.

With this in mind, we say a surface is {\it Saturnian} if each of its rings $R_i$ corresponds to a cycle in $S^{*}$ (and $S\setminus R_i$ is tileable). For these surfaces, the following holds.

\begin{theorem}\label{thm:saturndiameter}
If $S$ is Saturnian, then
$$\diam(\FF_{S}) = \sum_{v \in V(S)} \lev(v) = \sum_{i\geq 1} i |V_{i}|$$
where $|V_{i}|$ is the cardinality of $V_{i}$.
\end{theorem}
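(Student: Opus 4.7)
The plan is to apply Theorem \ref{thm:cyclediameter}, which reduces the problem to showing
$$\max_{\CC} \sum_{c \in \CC} \vol_{+1}(c) = \sum_{v \in V(S)} \lev(v),$$
by establishing matching lower and upper bounds.

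For the lower bound, I would exhibit a realizable cycle collection attaining the value. Take $\CC_{\mathrm{ring}} := \{R_1, R_2, \ldots\}$ with all rings oriented positively. The Saturnian hypothesis ensures each $R_i$ is a cycle in $S^*$; the rings are pairwise vertex-disjoint; and $S \setminus \bigcup_i R_i$ is empty (the peeling exhausts $S$), so the collection arises from an actual pair of tilings. Since $R_i$ is the boundary cycle of $S^{i-1}$, the vertices it encloses (as a planar loop) are exactly the vertices lying deeper than the first $i$ layers, i.e., those with $\lev(v) \geq i$. Therefore
$$\sum_{i \geq 1} \vol_{+1}(R_i) = \sum_{i \geq 1} |\{v : \lev(v) \geq i\}| = \sum_{v \in V(S)} \lev(v),$$
giving the lower bound.

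For the upper bound, use the identity $\sum_c \vol_{+1}(c) = \sum_v n_{\CC}(v)$, where $n_\CC(v)$ is the number of cycles in $\CC$ surrounding $v$. It then suffices to prove the key lemma: $n_\CC(v) \leq \lev(v)$ for every $v$ and every cycle collection $\CC$. Fix $v$ and choose a shortest sequence of adjacent squares $s_1, \ldots, s_k$ from $v$ to $\partial S$ with $k = \lev(v)$; interpret the corresponding walk in $S^*$ (plus short tails from the corner $v$ into $s_1$ and from $s_k$ out of $S$) as a planar curve $\gamma$. Any cycle $c$ surrounding $v$ must contain some $s_i$ as a vertex: otherwise, since $S^*$ is a planar graph whose edges are unit segments between centers of orthogonally adjacent squares, no edge of $c$ can cross an edge of $\gamma$ (two such segments meet only at shared endpoints), contradicting that $\gamma$ joins the interior of $c$ to its exterior. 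Because cycles of $\CC$ are vertex-disjoint, distinct surrounding cycles occupy distinct squares among $s_1, \ldots, s_k$, whence $n_\CC(v) \leq k = \lev(v)$. Summing over $v$ yields the upper bound, and $\sum_v \lev(v) = \sum_{i\geq 1} i |V_i|$ is immediate from the definition of $V_i$.

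The main obstacle is the planarity argument in the key lemma. The geometric intuition is straightforward, but rigor requires verifying that no edge of a disjoint cycle $c$ can cross the three parts of $\gamma$: (i) the interior edges of $\gamma$ between consecutive centers $s_i, s_{i+1}$, which are disposed of by the fact that two edges of the planar graph $S^*$ meet only at shared endpoints; (ii) the initial corner-to-center segment from $v$ into $s_1$, which lies strictly inside the four squares around $v$ and cannot be crossed by any $S^*$-edge that avoids those squares as endpoints; and (iii) the final center-to-outside segment from $s_k$, handled analogously. A minor secondary point is confirming that $\CC_{\mathrm{ring}}$ is genuinely realizable — since $S^*$ is bipartite every $R_i$ has even length, so each ring admits the alternating $T/T'$ edge coloring required for it to be a domino cycle.
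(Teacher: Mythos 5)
Your proposal is correct and follows essentially the same route as the paper: the upper bound comes from bounding, at each vertex $v$, the number of disjoint cycles surrounding $v$ by $\lev(v)$ (the paper states this per-vertex bound tersely, you justify it via the shortest square-path and vertex-disjointness), and the lower bound comes from realizing the ring decomposition as a positively oriented cycle collection. Your additional checks (realizability of the ring collection via alternating edges, and the planarity/crossing argument) are just a more detailed write-up of the same argument.
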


\begin{proof}
The volume of a filling shape is positive and can be computed by summing the number of $1\times 1 \times 1$ blocks beneath the vertices of $S$. For a vertex $v$, this number cannot be any more than its level. The set of vertices $V_i$ are those at exactly distance $i$ from the boundary, and so we get the upper bound of 
$$
\sum_{v \in V(S)} \lev(v)=\sum_{i\geq 1} i |V_{i}|
$$
Now if $S$ is Saturnian, then the natural cycle decomposition of the rings gives the same lower bound.
\end{proof}

For standard surfaces that are Saturnian, we can compute this formula explicitly.
\begin{thmC}
When $S$ is a $n\times n$ square $Q(n)$, with $n$ even:
  	\begin{equation*}
	\diam(\FF_{Q(n)}) = \frac{n^{3}-n}{6}
	\end{equation*}
When $S$ is an $m\times n$ rectangle $R(m,n)$, with $m \geqslant n$ and at least one is even: 
  	\begin{equation*}
	\diam(\FF_{R(m,n)}) = \left\{\begin{array}{ll}
		\frac{mn^{2}}{4} - \frac{n^{3}}{12} - \frac{n}{6} & \text{if $n$ is even}\\
		\frac{mn^{2}}{4} - \frac{n^{3}}{12} + \frac{n}{12} - \frac{m}{4} & \text{otherwise}
		\end{array}
		\right.
	\end{equation*}
When $S$ is an Aztec diamond $A(n)$ of order $n$:
	\begin{equation*}
	\diam(\FF_{A(n)}) = \frac{n^{3}}{3} + \frac{n^{2}}{2} +  \frac{n}{6}
	\end{equation*}
\end{thmC}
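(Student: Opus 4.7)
The plan is to apply Theorem~\ref{thm:saturndiameter}, which reduces the diameter to $\sum_{v \in V(S)} \lev(v) = \sum_{i \geq 1} i |V_i|$ once $S$ is Saturnian. All three families admit an obvious nested ring decomposition whose rings are simple cycles in $S^{*}$: concentric rectangular frames for the square and the rectangle, and the staircase shells $A(n) \supset A(n-1) \supset \cdots$ for the Aztec diamond (each $A(k) \setminus A(k-1)$ being a cycle of $4k$ cells, with complement the tileable Aztec diamond $A(k-1)$). For the odd-$n$ rectangle the innermost one-cell-wide strip is not itself a cycle, but since it carries no positive-level vertex the Saturnian identity still holds using the cycle rings $R_1, \ldots, R_{(n-1)/2}$ alone.

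For the square and the rectangle, the level function is just $\lev(x, y) = \min(x, m-x, y, n-y)$ (with $m = n$ in the square case), so the level sets are concentric rectangular rings with $|V_i| = 2(m+n) - 8i$ for $1 \leq i < \lfloor n/2 \rfloor$. These are supplemented by an innermost core whose size depends on parity: a single center vertex for the even square, a $(m-n+1)$-vertex segment at level $n/2$ for the even-$n$ rectangle, and two rows of $m-n+2$ vertices each at level $(n-1)/2$ for the odd-$n$ rectangle. Substituting these counts into $\sum_i i |V_i|$ and simplifying via the standard identities for $\sum i$ and $\sum i^2$ produces the three formulas.

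For the Aztec diamond $A(n)$, I would first establish the closed form
\[
\lev\bigl((x,y)\bigr) = \left\lceil \frac{n - |x| - |y|}{2} \right\rceil
\]
for every interior lattice vertex (equivalently $(x,y) \in \Z^2$ with $|x|+|y| \leq n-1$), the upper bound coming from a diagonal king-walk to the appropriate staircase corner of $\partial A(n)$ and the lower bound from the fact that a single king-step changes $|x|+|y|$ by at most $2$ while $\partial A(n) \subset \{|x|+|y| \geq n\}$. Then rewriting $\lceil m/2 \rceil = \sum_{j \geq 1} \mathbf{1}[m \geq 2j-1]$ and swapping summations yields
\[
\diam(\FF_{A(n)}) = \sum_{j \geq 1} \bigl|\{(x,y) \in \Z^2 : |x|+|y| \leq n - 2j + 1\}\bigr|,
\]
each term being the diamond count $2m^2 + 2m + 1 = m^2 + (m+1)^2$ with $m = n - 2j + 1$. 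As $j$ varies, $m$ runs through every integer from $0$ or $1$ up to $n-1$ of parity opposite to $n$, and the pairing of consecutive squares telescopes to $\sum_{k=1}^n k^2 = n(n+1)(2n+1)/6 = n^3/3 + n^2/2 + n/6$. The main obstacle is isolating the closed form for $\lev$ on $A(n)$; once it is in hand the final summation is tidy bookkeeping.
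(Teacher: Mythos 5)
Your overall strategy is the paper's: check that each family is Saturnian, invoke Theorem~\ref{thm:saturndiameter}, and evaluate $\sum_{v}\lev(v)$. For the square and the rectangle your bookkeeping is correct and equivalent to the paper's (you sum $i\,|V_i|$ over explicit level sets $|V_i|=2(m+n)-8i$ plus the parity-dependent core, while the paper sums the layer-cake $\sum_{i}|\{v:\lev(v)\geq i\}|$ using the interior-vertex counts of the peeled rectangles $R(m-2i,n-2i)$; both give the stated formulas), and your remark on the odd-$n$ case --- that the last genuine cycle ring is the frame of the $3$-wide rectangle and the leftover $1$-wide strip contains no vertices of higher level, so the lower-bound construction still matches the level sum --- is a point the paper passes over silently.

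The genuine flaw is your description of the ring structure of the Aztec diamond. The shell $A(k)\setminus A(k-1)$ (the $4k$ cells whose centers satisfy $|x_c|+|y_c|=k$) is \emph{not} a cycle in $S^{*}$: consecutive cells along the staircase have centers differing by $(\pm 1,\mp 1)$ and share only a corner, and in fact away from the four axis crossings no two cells of the shell even share an edge. You can also see the claim must fail from your own level formula: if every shell were a surrounding cycle, each interior vertex would be enclosed by $n-|x|-|y|$ nested cycles, exceeding the bound $\lev(v)=\bigl\lceil (n-|x|-|y|)/2\bigr\rceil$ forced by the upper-bound half of Theorem~\ref{thm:saturndiameter}, and the resulting diameter would be larger than the formula you are proving. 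The correct rings are the two-shell bands, as in the paper's observation that peeling one ring turns $A(n)$ into $A(n-2)$: $R_1\bigl(A(k)\bigr)=A(k)\setminus A(k-2)$ is a single closed staircase annulus of $8k-4$ cells, it \emph{is} a cycle in $S^{*}$, and its complement $A(k-2)$ is tileable --- this is exactly why the ceiling of one half appears in your formula for $\lev$. With that correction the Saturnian hypothesis is verified, and the remainder of your Aztec computation (the closed form for $\lev$, the layer-cake rewriting, and the telescoping of consecutive squares to $\sum_{k=1}^{n}k^{2}=n(n+1)(2n+1)/6$) is correct and coincides term by term with the paper's sum $\sum_{i}\bigl(2(n-2i)^{2}-2(n-2i)+1\bigr)$.
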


\begin{figure}[h]
\begin{center}
\subfloat[The maximal filling shape for $A(4)$ of volume $30$]
	{\includegraphics[scale=0.18]{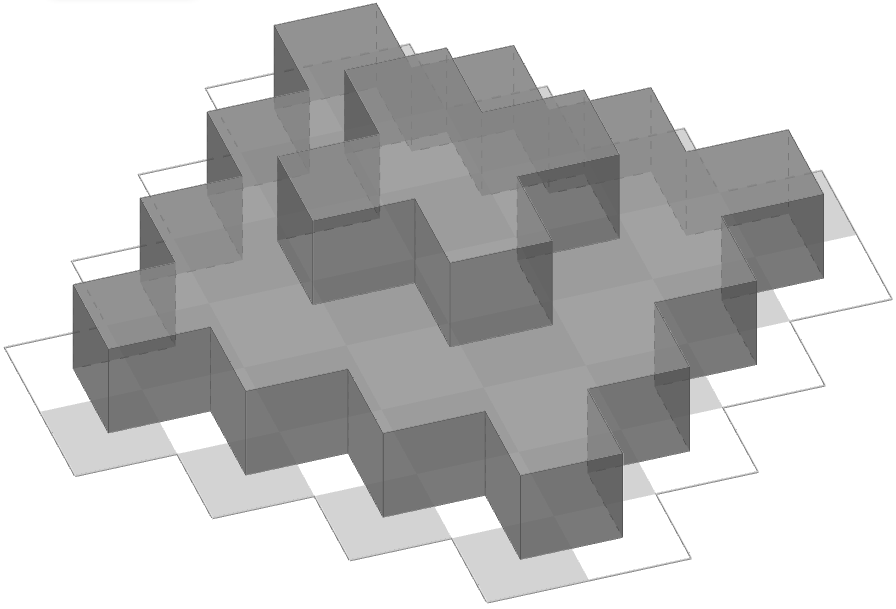}} \quad \quad
\subfloat[The maximal filling shape for $Q(6)$ of volume $35$]
	{\includegraphics[scale=0.16]{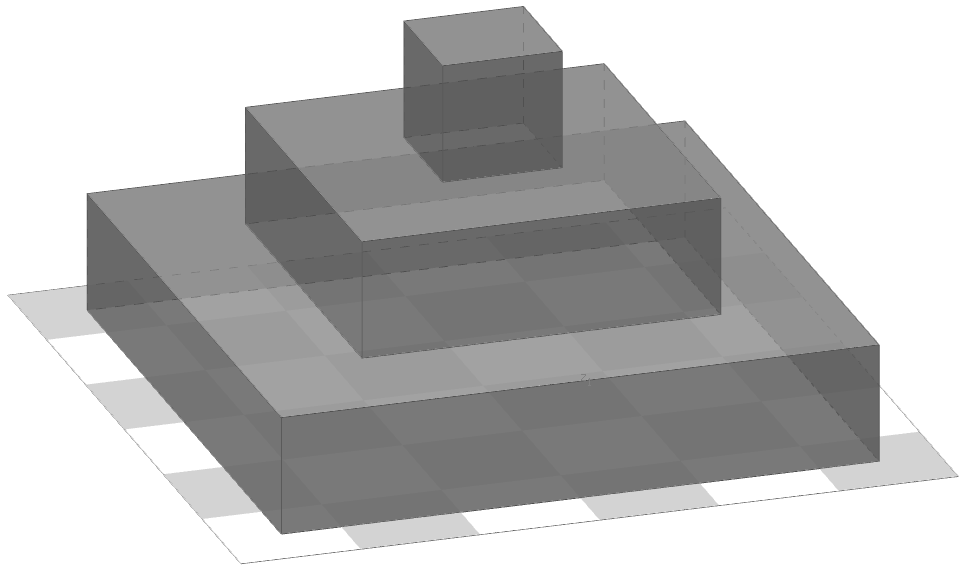}}
\caption{} 
\label{ghx:aztecsquare}
\end{center}
\end{figure}

\begin{proof}
We'll prove the formula for the rectangle and for the Aztec diamond (the square simply being the $R(n,n)$ rectangle). 

To begin, we note the self scaled-similarity the two figures have in common: by removing the ring $R_{1}$ of $R(m,n)$, resp. of $A(n)$, we obtain $R(m-2,n-2)$, resp. $A(n-2)$. We are also interested in the number of interior vertices (we denote by $\mathring{V}\left(X\right)$ the set of interior vertices of $X$).

For the rectangle, we have
\begin{align*}
|\{v\in \mathring{V}\left( R(m,n)\right)\}| & = (n-1)(m-1)
\end{align*}
We can also count those of the Aztec diamond column by column, starting from the central column:
\begin{align*}
|\{ v\in \mathring{V}\left(A(n)\right)\}|
	& = (2n-1) + 2\Big( (2n-3) + (2n-5) + \cdots + (2n-(2n-1)) \Big)\\
	& = 2\Big( \sum_{i=1}^{n}2n-(2i-1) \Big) - (2n-1)\\
	& = 2\Big( (2n+1)n - 2(n+1)\frac{n}{2} \Big) - (2n-1)\\
	& = 2n^{2} - 2n +1
\end{align*}
From the previous theorem we have
\begin{equation*}
\diam(\FF_{S})  = \sum_{v \in V(S)} \lev(v) = \sum_{i \geq 1}|\{v \in P: \lev(v) \geqslant i \}|
\end{equation*}
For the rectangle this becomes
\begin{equation*}
\diam(\FF_{R(m,n)}) = \sum_{i=1}^{\lceil \frac{n}{2}\rceil}{\big(n-(2i-1)\big)\big(m-(2i-1)\big)}
\end{equation*}
and for the Aztec diamond
\begin{equation*}
\diam(\FF_{A(n)}) = \sum_{i=0}^{\lceil\frac{n}{2}\rceil - 1}{2(n-2i)^{2} - 2(n-2i) + 1}
\end{equation*}
The results follow by expanding the terms and by using the classical identities
\begin{equation*}
\sum_{i=1}^{m}{i}=(m+1)\frac{m}{2} \quad \quad \text{and} \quad \quad \sum_{i=1}^{m}{i^{2}}=(m+1)(2m+1)\frac{m}{6}
\end{equation*}
\end{proof}

\addcontentsline{toc}{section}{References}
\bibliographystyle{amsplain}

{\em Addresses:}

Department of Mathematics, University of Fribourg, Switzerland \\
{\em Email:} \href{mailto:hugo.parlier@unifr.ch}{hugo.parlier@unifr.ch}\\
{\em Email:} \href{mailto:samuel.zappa@unifr.ch}{samuel.zappa@unifr.ch}\\

\end{document}